
\documentclass[reqno,10pt]{amsart}
\usepackage[all,line,poly,rotate,ps,dvips]{xy}
\SelectTips{cm}{}

\usepackage{comment}
\usepackage{amssymb,eucal,graphicx}
\usepackage{enumerate}

\addtolength{\textwidth}{4cm} \addtolength{\hoffset}{-2cm}
\addtolength{\textheight}{3cm} \addtolength{\voffset}{-1cm}

\raggedbottom

\numberwithin{equation}{section}

\newtheorem{theorem}[equation]{Theorem}

\newtheorem{corollary}[equation]{Corollary}
\newtheorem{claim}[equation]{Claim}
\newtheorem{lemma}[equation]{Lemma}
\newtheorem{proposition}[equation]{Proposition}

\theoremstyle{definition}

\newtheorem{remark}[equation]{Remark}

\theoremstyle{remark}


\newcommand{\C}{\mathbb{C}}

\newcommand{\G}{\mathbb{G}}

\newcommand{\Pp}{\mathbb{P}}

\newcommand{\Ff}{\mathcal{F}}

\newcommand{\HH}{\mathcal{H}}

\newcommand{\N}{\mathcal{N}}

\newcommand{\T}{\mathcal{T}}
\newcommand{\Oc}{\mathcal{O}}
\newcommand{\OO}{\mathcal{O}}

\DeclareMathOperator{\Ext}{{\rm Ext}}


\begin{document}

\title[Brill--Noether loci]
{Brill--Noether loci of stable rank--two vector bundles on a general curve}

\author{C. Ciliberto}
\curraddr{Dipartimento di Matematica, Universit\`a degli Studi di
Roma Tor Vergata\\ Via della Ricerca Scientifica - 00133 Roma
\\Italy} \email{cilibert@mat.uniroma2.it}

\author{F. Flamini}
\curraddr{Dipartimento di Matematica, Universit\`a degli Studi di
Roma Tor Vergata\\ Via della Ricerca Scientifica - 00133 Roma
\\Italy} \email{flamini@mat.uniroma2.it}

\subjclass[2000]{Primary 14J26, 14C05, 14H60; Secondary 14D06, 14D20.}

\keywords{Brill-Noether teory of vector bundles, Hilbert schemes
of scrolls, Moduli.}

\maketitle

\begin{abstract}In this note we give an easy proof of the existence of generically smooth components
of the expected dimension 
of certain Brill--Noether loci of stable rank 2 vector bundles  on a curve with general moduli, 
with related applications to Hilbert scheme of scrolls.
\end{abstract}

\section*{Introduction}\label{sec:intro}

The Brill--Noether theory of  linear series on a smooth, irreducible, complex,  projective curve $C$ of genus $g$ was initiated in the second half of  XIX century and fully developed about one century later by the brilliant work of several mathematicians (see \cite {ACGH} for a general reference).  As  a result, we have now a complete understanding of the \emph{Brill--Noether loci}  of line bundles $L$ of degree $d$ with $h^ 0(C,L)>r$ on a curve $C$ with general moduli. They can be described as determinantal loci  inside $ {\rm Pic}^ d(C)$ and we know their Zariski tangent spaces,  their dimension, their singularities, how they are  contained in each other, etc.

The study of $n$--dimensional scrolls over curves (with $n\ge 2$) also goes back to the second half of XIX century. It is equivalent to 
the study of rank $n$ vector bundles over curves, and as such it has received a lot of attention 
in more recent times. In order to have reasonable moduli spaces for these bundles, one has to restrict the attention to semistable ones.  For them  it has been set up an analogue of   Brill--Noether's theory. Unfortunately the results here are not so complete as in the rank one case, and we are still far from having a full understanding of the situation. We refer the reader to \cite {Tex} (and to the references therein) for a general account on the subject. 

In this paper we deal with the rank $2$ case and $C$ with general moduli and genus $g$. A  result by M. Teixidor (see Theorem \ref {thm:TB}) provides examples of components of the expected dimension (see \eqref {eq:bn} below) of Brill--Noether loci of stable, rank 2 vector bundles $\Ff$ of degree $d$ with $h^ 0(C,\Ff)>\ell$, in suitable ranges for $d, g$ and $\ell$. Teixidor's ingenuous, but not easy, proof uses a degeneration of  $C$  to a rational $g$--cuspidal curve $C_0$ and analyses the limits of the required bundles on $C_0$.

This note is devoted to prove a similar result  (i.e. Theorem \ref {thm:Main}). The ranges for $d, g$ and $\ell$ for which we prove the existence of our components of the  Brill--Noether loci  are slightly worse than Teixidor's ones.  On the other hand we are able to prove a bit more than Teixidor does:  not only  the components in questions have the expected dimension, but  they are also generically smooth. 
In addition, our approach is quite easy and does not require degenerating $C$.   We  construct our bundles  as 
extensions of line bundles, and we prove that their \emph{Petri map} (see \S \ref {ssec:BN}) is in general injective, 
which is the same as proving that the corresponding Brill--Noether loci  are generically  smooth and 
of the expected dimension. 

The  paper is organized  as follows. In \S \ref {sec:prel} we recall the basics about 
moduli spaces of semistable rank--two vector bundles on a curve (see \S \ref {ssec:semist}),
Brill--Noether loci (see \S \ref {ssec:BN}) and Teixidor's theorem (see \S \ref {ssec:teix}). 
The full \S \ref {S:BNnew} is devoted to the construction of our examples. In \S \ref {S:HSL} 
we make some applications to Hilbert schemes of scrolls in projective spaces. We show that our examples  give rise to linearly normal, smooth scrolls belonging to irreducible components of the Hilbert scheme, which are generically smooth of the expected dimension
(see \S\S  \ref {ssec:normal}, \ref {ssecx:comp}). In \S \ref 
{ssec:nonln} we show that, by contrast, their projections  in $\Pp^ {d - 2g
+ 1}$ do not fill up irreducible components of the Hilbert scheme: they are in fact contained in the
unique component $\HH_{d,g}$ of the
Hilbert scheme containing all linearly normal scrolls of degree
$d$ and genus $g$ in $\Pp^{n}$ (cf. \cite[Theorem 1.2]{CCFMLincei} and \cite[Theorem 1]{CCFMnonsp}).\medskip

{\bf Aknowledgements}: It is a pleasure to dedicate this paper to our friend and colleague Gerard van der Geer on the occasion of his 60th birthday.  

We thank A. Verra for useful discussions on the subject of this paper. 

\section{Preliminaries}\label{sec:prel}


\subsection{Moduli spaces of semistable rank--two vector bundles} \label{ssec:semist}
For any integer $d$, we denote by $U_C(d)$ the 
moduli space of rank 2, semistable vector bundles of degree $d$
on $C$. Recall that a   rank 2 vector bundle $\Ff$ of degree $d$ is \emph{semistable} [resp. \emph{stable}]  
if for all quotient line bundles
$\Ff \to\!\!\!\!\! \to L$ of degree $d_1$ one has $d\le 2d_1$ [resp. $d< 2d_1$]. 
$U_C(d)$ is a projective variety and we let  $U^s_C(d)$ be its
open subset whose points correspond to stable vector bundles. 
If $\Ff$ is a semistable rank--two vector bundle on $C$, we 
denote by $[\Ff]$ its class in $U_C(d)$. 

The cases $0\le g\le 1$ are  quite classical and well known (see, e.g., \cite [Chapt. V, \S 2] {Ha},   \cite{Tex, New}).
In general we have (cf.\ \cite[Sect. 5]{New}):

\begin{proposition}\label{prop:moduli} If $g\ge 2$, then:

\begin {itemize}

\item [(i)] $U_C(d)$ is irreducible, normal, of dimension
$4g-3$ and $U_C^s(d)$ is the set of smooth points
of $U_C(d)$;

\item[(ii)] if $d$ is odd, then $U_C(d) = U_C^s(d)$ whereas if $d$
is even, the inclusion $U_C^s(d) \subset U_C(d)$ is strict.
\end{itemize}
\end{proposition}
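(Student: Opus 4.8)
The plan is to realize $U_C(d)$ as a GIT quotient of (an open subset of) a Quot scheme and then read off each assertion from the geometry of the parameter space together with Luna's slice theorem; this is essentially the route of \cite[Sect. 5]{New}, which I would follow. First I would set up deformation theory. For any rank--two bundle $\Ff$ on $C$, infinitesimal deformations are classified by $H^1(C,\mathcal{E}nd(\Ff))$, while obstructions lie in $H^2(C,\mathcal{E}nd(\Ff))$, which vanishes because $C$ is a curve. Hence the relevant open subset $R$ of a Quot scheme parametrizing quotients $\OO_C^{\oplus N}\twoheadrightarrow \Ff(n)$ for $n\gg 0$ is smooth at every point corresponding to a semistable $\Ff$. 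A Riemann--Roch computation on the rank--four, degree--zero bundle $\mathcal{E}nd(\Ff)\cong \Ff^\vee\otimes\Ff$ gives $\chi(\mathcal{E}nd(\Ff))=4(1-g)$, so that $h^1(\mathcal{E}nd(\Ff))=h^0(\mathcal{E}nd(\Ff))+4g-4$; for a stable, hence simple, bundle one has $h^0(\mathcal{E}nd(\Ff))=1$, whence $h^1(\mathcal{E}nd(\Ff))=4g-3$. This produces the expected dimension of $U_C(d)$.

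Next I would treat normality and the smooth locus. The space $U_C(d)$ is a good quotient $R/\!\!/G$ by a reductive group $G$ (a suitable product of general linear groups acting on the Quot scheme). Since $R$ is smooth, hence normal, along the semistable locus, and a good quotient of a normal variety by a reductive group is again normal, it follows that $U_C(d)$ is normal. To identify the smooth locus with $U_C^s(d)$ I would invoke Luna's \'etale slice theorem. At a stable point the bundle is simple, its orbit is closed, and the stabilizer reduces to the scalars $\C^*$ acting trivially on the quotient, so the slice representation is trivial and $U_C(d)$ is smooth there of dimension $4g-3$. At a strictly semistable point the stabilizer is strictly larger, the slice representation is nontrivial, and the local model is a nontrivial quotient singularity, which forces such points to be singular. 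This slice analysis is the step I expect to be the main obstacle, since it requires the full GIT machinery and a careful description of the local model.

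Finally I would handle irreducibility and part (ii). Irreducibility of $U_C(d)$ follows from that of $R$ along the semistable locus: since $R$ is smooth, it suffices to establish connectedness, and for this I would exhibit a general stable bundle as an extension $0\to L_1\to\Ff\to L_2\to 0$ of line bundles of suitable degrees $d_1+d_2=d$, so that the irreducible family $\Pic^{d_1}(C)\times\Pic^{d_2}(C)$ together with the projectivized spaces $\Pp(\Ext^1(L_2,L_1))$ dominates $U_C^s(d)$ (the degrees being chosen according to the standard maximal--subbundle estimates, whose bookkeeping I would not grind out here). For (ii), observe that a strictly semistable bundle must admit a quotient line bundle of degree exactly $d/2$; this is impossible when $d$ is odd, so then $U_C(d)=U_C^s(d)$, whereas when $d$ is even the split bundles $L_1\oplus L_2$ with $\deg L_1=\deg L_2=d/2$ are strictly semistable, making the inclusion $U_C^s(d)\subset U_C(d)$ strict.
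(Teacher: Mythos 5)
The paper gives no proof of this proposition at all: it is recalled verbatim from \cite[Sect. 5]{New}, and your sketch is exactly the Quot-scheme/GIT construction carried out there. Most of your steps are sound: the Riemann--Roch count $h^1(C,\mathcal{E}nd(\Ff))=h^0(C,\mathcal{E}nd(\Ff))+4g-4=4g-3$ for $\Ff$ simple, normality of a good quotient of a smooth (hence normal) parameter space by a reductive group, irreducibility via the extension construction, and the degree argument for (ii) are all standard and correct.

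The one genuine gap is the sentence asserting that at a strictly semistable point ``the slice representation is nontrivial, and the local model is a nontrivial quotient singularity, which forces such points to be singular.'' That implication is false in general: a reductive group can act nontrivially on a smooth affine variety with smooth quotient (for instance $\C^*$ acting on $\C^2$ with weights $\pm1$ has quotient $\C$, and $\mathrm{Sym}^2\C\cong\C^2$ is smooth). To close the argument you must actually compute the slice. At a polystable point $L_1\oplus L_2$ with $\deg L_1=\deg L_2=d/2$ and $L_1\not\cong L_2$, the stabilizer torus acts trivially on $H^1(C,\Oc_C)^{\oplus 2}$ and with opposite weights on $H^1(C,L_1\otimes L_2^{-1})\oplus H^1(C,L_2\otimes L_1^{-1})\cong\C^{g-1}\oplus\C^{g-1}$, so the local model is a smooth factor times the affine cone over the Segre embedding of $\Pp^{g-2}\times\Pp^{g-2}$, equivalently the locus of $(g-1)\times(g-1)$ matrices of rank at most $1$. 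This is singular at the origin precisely when $g\ge 3$; for $g=2$ it is a smooth curve. This is the well-known Narasimhan--Ramanan exception: for $g=2$, rank $2$ and $d$ even the moduli space is in fact smooth at strictly semistable points, so the identification of the singular locus with the strictly semistable locus requires $g\ge 3$ (or at least a separate discussion of the case $g=2$, $d$ even). Since your argument as written would ``prove'' the identification uniformly for all $g\ge 2$, it cannot be complete as it stands; the explicit determinantal local model is the missing ingredient, and it also shows why the statement, as recorded in the paper without the exceptional case, should be read with that caveat.
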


\subsection{Speciality and Brill--Noether loci}\label {ssec:BN}
If $[\Ff]\in U_C(d)$, we denote by $i(\Ff)$,
or simply by $i$ if there is no danger of confusion, the integer $h^1(C,\Ff)$,
which is called the \emph{speciality} of $\Ff$. Similarly we set
$\ell(\Ff)=h^0(C,\Ff)$, and $r(\Ff)=\ell(\Ff)-1$, and we may often write
$\ell, r$ rather than $\ell(\Ff), r(\Ff)$. By Riemann--Roch theorem, we have
\[
\ell(\Ff)=d-2g+2+i(\Ff).
\]

Fix positive integers $d$ and $i$.  Set $\ell=d-2g+2+i$.
One can consider the subset $B^\ell_C(d)$ of all classes $[\Ff] \in U_C(d)$ such that $i(\Ff)\ge i$
and accordingly $\ell(\Ff)\ge \ell$. This is called the
$\ell^{th}$--{\em Brill-Noether locus} and it has a natural
determinantal scheme structure (see, e.g. \cite{Tex}). A lower bound for the dimension of $B^\ell_C(d)$
as a determinantal locus is its  {\em expected
dimension},  given by the \emph{Brill-Noether number}
\begin{equation}\label{eq:bn}
\rho_d^\ell:= 4g - 3 - i\ell.
\end{equation}

The infinitesimal deformations of $\Ff$ 
along which all sections in $H^0(C,\Ff)$ deform, fill up the 
vector subspace of $H^1(C,\Ff\times \Ff^*)\cong H^0(C, \omega_C \otimes \Ff\times \Ff^*)^*$ which is
the annihilator of the image of the
cup--product map 
\[P_{\Ff} : H^0(C, \Ff) \otimes H^0(C, \omega_C \otimes \Ff^*)
\longrightarrow H^0(C, \omega_C \otimes \Ff \otimes \Ff^*),\]
called the {\em Petri map} of $\Ff$ (see, e.g. \cite{TB00}). 
In other words ${\rm Ann}({\rm Im}(P_{\Ff}))$ is the Zariski tangent space of 
$B^\ell_C(d)$ at $[\Ff]$, where $\deg(\Ff)=d$ and $\ell=h^ 0(C, \Ff)$. In this case, 
by Riemann-Roch theorem,
one has $$\rho_d^\ell = h^1(C, \Ff \otimes \Ff^*)
- h^0(C, \Ff) h^1(C, \Ff).$$
Hence:

\begin{lemma}\label{lem:Petri} In the above setting, $B^\ell_C(d)$ is non--singular, of dimension $\rho_d^\ell$ at $[\Ff]$
if and only if $P_{\Ff}$ is injective.
\end{lemma}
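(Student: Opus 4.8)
The plan is to read off the statement directly from the deformation-theoretic description of the Brill--Noether locus that was set up just above the statement. The key structural fact already recorded is that the Zariski tangent space to $B^\ell_C(d)$ at a point $[\Ff]$ is $\mathrm{Ann}(\mathrm{Im}(P_{\Ff}))$ inside $H^1(C,\Ff\otimes\Ff^*)$, which is the tangent space to $U_C(d)$ at $[\Ff]$. Thus the proof is essentially a dimension count comparing this tangent space against the expected dimension $\rho_d^\ell$, together with the general principle that for a determinantal scheme whose codimension equals the expected codimension, equality of the tangent-space dimension with the expected dimension is equivalent to smoothness.

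First I would compute $\dim \mathrm{Ann}(\mathrm{Im}(P_{\Ff}))$. Since $\mathrm{Ann}(\mathrm{Im}(P_{\Ff}))$ is the annihilator of the image of $P_{\Ff}$ inside the dual space $H^1(C,\Ff\otimes\Ff^*)\cong H^0(C,\omega_C\otimes\Ff\otimes\Ff^*)^*$, one has
\[
\dim \mathrm{Ann}(\mathrm{Im}(P_{\Ff})) = h^1(C,\Ff\otimes\Ff^*) - \dim \mathrm{Im}(P_{\Ff}).
\]
The source of $P_{\Ff}$ has dimension $h^0(C,\Ff)\cdot h^0(C,\omega_C\otimes\Ff^*) = h^0(C,\Ff)\cdot h^1(C,\Ff)$ by Serre duality, so $\dim\mathrm{Im}(P_{\Ff}) \le h^0(C,\Ff)\,h^1(C,\Ff)$, with equality precisely when $P_{\Ff}$ is injective. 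Combining these with the Riemann--Roch identity $\rho_d^\ell = h^1(C,\Ff\otimes\Ff^*) - h^0(C,\Ff)\,h^1(C,\Ff)$ recorded above yields
\[
\dim T_{[\Ff]} B^\ell_C(d) = \dim \mathrm{Ann}(\mathrm{Im}(P_{\Ff})) \ge \rho_d^\ell,
\]
with equality if and only if $P_{\Ff}$ is injective.

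The final step is to convert this tangent-space estimate into the smoothness statement. I would invoke the fact, already acknowledged in \S\ref{ssec:BN}, that $B^\ell_C(d)$ carries a determinantal scheme structure whose every component has dimension at least $\rho_d^\ell$; hence $\rho_d^\ell$ is a universal lower bound for $\dim_{[\Ff]} B^\ell_C(d)$, and in turn $\dim_{[\Ff]} B^\ell_C(d) \le \dim T_{[\Ff]} B^\ell_C(d)$. If $P_{\Ff}$ is injective, the chain of inequalities $\rho_d^\ell \le \dim_{[\Ff]} B^\ell_C(d) \le \dim T_{[\Ff]} B^\ell_C(d) = \rho_d^\ell$ collapses, forcing the tangent space to have dimension equal to the local dimension and hence $[\Ff]$ to be a smooth point of dimension $\rho_d^\ell$. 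Conversely, if $[\Ff]$ is smooth of dimension $\rho_d^\ell$, then $\dim T_{[\Ff]} B^\ell_C(d) = \rho_d^\ell$, which by the computation above forces $\dim\mathrm{Im}(P_{\Ff}) = h^0(C,\Ff)\,h^1(C,\Ff)$, i.e. injectivity of $P_{\Ff}$.

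The only genuinely substantive point, rather than a bookkeeping identity, is the identification of $\mathrm{Ann}(\mathrm{Im}(P_{\Ff}))$ as the Zariski tangent space and the determinantal lower bound $\dim_{[\Ff]} B^\ell_C(d)\ge \rho_d^\ell$; but both are exactly the standard facts set up in \S\ref{ssec:BN}, so here they may be cited. I expect no real obstacle: once the tangent space is identified and the dimension bound is in hand, the equivalence is a formal consequence of the inequality $\dim_{[\Ff]} X \le \dim T_{[\Ff]} X$ for a scheme $X$ at a point, being an equality exactly at smooth points of the expected dimension.
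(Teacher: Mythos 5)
Your proposal is correct and follows exactly the route the paper intends: the paper offers no separate proof of Lemma \ref{lem:Petri}, deriving it directly (``Hence:'') from the identification of the Zariski tangent space with ${\rm Ann}({\rm Im}(P_{\Ff}))$ and the identity $\rho_d^\ell = h^1(C,\Ff\otimes\Ff^*) - h^0(C,\Ff)h^1(C,\Ff)$, which is precisely the dimension count you carry out. Your write-up merely makes explicit the determinantal lower bound and the inequality $\dim_{[\Ff]}X\le\dim T_{[\Ff]}X$ that the paper leaves implicit.
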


We finish this section by recalling two results.  For the first, see \cite[Proposition 3]{CCFMnonsp}:

\begin{proposition}\label{prop:sstabh1}
Let $C$ be a smooth, irreducible, projective curve of genus $g \ge 2$.  If $d \ge 2g$ then $i(\Ff) = 0$ for $[\Ff] \in U_C(d)$ general.
\end{proposition}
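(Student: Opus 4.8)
The plan is to prove the dual vanishing $h^1(C,\Ff)=h^0(C,\omega_C\otimes\Ff^*)=0$ for a general $[\Ff]\in U_C(d)$ by producing a single bundle where it already holds and then spreading the property out. Since $i(\Ff)\ge 1$ precisely when $[\Ff]$ lies in the determinantal, hence closed, Brill--Noether locus $B^{\ell}_C(d)$ with $\ell=d-2g+3$, and since $U_C(d)$ is irreducible by Proposition \ref{prop:moduli}(i), it suffices to exhibit one point $[\Ff_0]\in U_C(d)$ with $i(\Ff_0)=0$: then $B^{\ell}_C(d)\subsetneq U_C(d)$ is a proper closed subset, so its complement is dense and a general bundle is non--special. (Equivalently, one invokes upper semicontinuity of $h^1$ in a flat family with dense image in $U_C(d)$.) Thus the entire content is the construction of such an $\Ff_0$.

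For the construction I would use balanced line bundles. Write $d=a+b$ with $a=\lfloor d/2\rfloor$ and $b=\lceil d/2\rceil$, so that $a\le b\le a+1$ and, since $d\ge 2g$, both $a,b\ge g$. Choose general line bundles $L_1,L_2$ of degrees $a,b$. A general line bundle $L$ of degree $e\ge g$ satisfies $h^1(L)=0$: indeed $h^1(L)=h^0(\omega_C\otimes L^{-1})$, and $\omega_C\otimes L^{-1}$ is a general line bundle of degree $2g-2-e\le g-2$, whose space of sections vanishes because the image of the Abel--Jacobi map in $\Pic^{2g-2-e}(C)$ is a proper subvariety (this uses only that $C$ is smooth, not general moduli). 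Hence $h^1(L_1)=h^1(L_2)=0$. Now let $\Ff_0$ be any non--split extension
\[0\to L_1\to \Ff_0\to L_2\to 0,\]
which exists since $\Ext^1(L_2,L_1)=H^1(C,L_1\otimes L_2^{-1})\ne 0$ (its Euler characteristic is $a-b-g+1<0$). The long exact cohomology sequence gives $h^1(\Ff_0)\le h^1(L_1)+h^1(L_2)=0$, so $i(\Ff_0)=0$.

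It remains to check that $\Ff_0$ is semistable (indeed stable when $d$ is odd). Let $M\hookrightarrow\Ff_0$ be a sub--line--bundle and consider the composite $M\to L_2$. If it vanishes then $M\hookrightarrow L_1$ and $\deg M\le a$; if it is nonzero then $\deg M\le b$, with equality forcing $M\to L_2$ to be an isomorphism and hence the extension to split, which is excluded. Therefore $\deg M\le a\le d/2$, which gives semistability, and when $d$ is odd one gets the strict inequality $\deg M\le (d-1)/2<d/2$, i.e.\ stability (consistent with Proposition \ref{prop:moduli}(ii)). This produces the desired $[\Ff_0]\in U_C(d)$ and completes the reduction. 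The one point requiring care---the main obstacle---is the passage from the special bundle $\Ff_0$ to the general member of $U_C(d)$: when $d$ is even $\Ff_0$ is only strictly semistable, so it lies on the boundary $U_C(d)\setminus U_C^s(d)$, and one must apply the closedness/semicontinuity argument to $B^{\ell}_C(d)$ as a closed subscheme of the whole coarse space $U_C(d)$ (not merely of $U_C^s(d)$) before concluding that the general, necessarily stable, bundle is non--special.
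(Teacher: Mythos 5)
Your proposal is correct. Note that the paper does not actually prove this statement: it is quoted verbatim from \cite[Proposition 3]{CCFMnonsp}, so there is no internal proof to compare against. Your argument is a clean, self-contained and essentially standard one: reduce to exhibiting a single non-special point of the irreducible variety $U_C(d)$, then build it as a non-split extension of two general non-special line bundles of balanced degrees $\lfloor d/2\rfloor$ and $\lceil d/2\rceil$. All the steps check out: $\Ext^1(L_2,L_1)=H^1(C,L_1\otimes L_2^{-1})\neq 0$ since its Euler characteristic is $a-b-g+1\le 1-g<0$ for $g\ge 2$; the long exact sequence gives $h^1(\Ff_0)=0$; and the sub-line-bundle analysis correctly yields semistability (with the equality case $\deg M=b$ ruled out by non-splitness, which is exactly what gives stability for $d$ odd). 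You are also right to flag the only delicate point, namely that for $d$ even the constructed bundle is strictly semistable, so its moduli point is the S-equivalence class of $L_1\oplus L_2$; the cleanest repair is the one you mention in parentheses, i.e.\ to run the semicontinuity of $h^1$ on an irreducible parameter space (an open subset of a Quot scheme) dominating $U_C(d)$, rather than on the coarse space itself, since $h^1$ is not well defined on S-equivalence classes in general. (In fact, for $d$ even you could dispense with the extension altogether and take $\Ff_0=L_1\oplus L_2$ directly, as $h^1(L_1\oplus L_2)=0$ already; the extension is only genuinely needed for $d$ odd, where $U_C(d)=U_C^s(d)$.)
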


Indeed, we will  be interested in the case $d\ge 2g$ in the rest of this paper. As for the next result, which will somehow justify our construction in \S \ref {S:BNnew}, see 
\cite [Corollary 7.3]{GP1}:

\begin{proposition}\label{prop:FG}
Let $C$ be a smooth, irreducible, projective curve of genus $g \ge 1$ and let $\Ff$ be a special rank 2 vector bundle on $C$. Then there is a quotient $\Ff \to\!\!\!\!\! \to L$ with $L$ a special line bundle.
\end{proposition}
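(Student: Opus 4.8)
The plan is to convert the speciality of $\Ff$ into a nonzero homomorphism to the canonical bundle and then pass to its image. Since $\Ff$ is special we have $h^1(C,\Ff)>0$; by Serre duality this equals $h^0(C,\omega_C\otimes\Ff^*)$, and because $\Ff$ has rank $2$ there is a canonical identification $\omega_C\otimes\Ff^*\cong\mathcal{H}om(\Ff,\omega_C)$. Thus a nonzero global section on the right produces a nonzero sheaf homomorphism $\phi\colon\Ff\to\omega_C$. This is the only place the hypothesis on $\Ff$ is used, and it reduces the problem to analysing such a $\phi$.

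Next I would study the image $N:=\mathrm{Im}(\phi)\subseteq\omega_C$. As $\phi\neq 0$ and $\omega_C$ is a line bundle on the smooth curve $C$, the image is a torsion--free sheaf of rank $1$, hence locally free; so $N$ is a sub--line--bundle of $\omega_C$, necessarily of the form $N=\omega_C(-D)$ for some effective divisor $D\geq 0$, the quotient $\omega_C/N$ being a torsion sheaf supported on $D$. Factoring $\phi$ as $\Ff\twoheadrightarrow N\hookrightarrow\omega_C$, the corestriction $\Ff\twoheadrightarrow N$ realizes $N$ as a quotient line bundle of $\Ff$, which is precisely the type of quotient required by the statement.

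Finally I would check that this quotient is special. Again by Serre duality, $h^1(C,N)=h^0(C,\omega_C\otimes N^{-1})=h^0(C,\OO_C(D))\geq 1$, since $D$ is effective; hence $N$ is special and the proposition follows. The one step deserving a little care is the passage from the abstract homomorphism $\phi$ to an honest quotient \emph{line bundle}: this is exactly where smoothness of $C$ enters, guaranteeing that the rank--$1$ image $N$ is locally free rather than merely torsion--free, so that $\Ff\twoheadrightarrow N$ is a surjection onto a genuine line bundle. Beyond this bookkeeping the argument is a direct double application of Serre duality, so I do not expect any serious obstacle.
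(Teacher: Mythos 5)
Your argument is correct. Note that the paper does not prove this proposition at all: it is quoted from Fuentes-Garcia--Pedreira \cite[Corollary 7.3]{GP1}, so there is no internal proof to compare against. Your self-contained derivation is the standard one and each step checks out: Serre duality gives $h^1(C,\Ff)=h^0(C,\omega_C\otimes\Ff^*)=\hom(\Ff,\omega_C)>0$, hence a nonzero $\phi\colon\Ff\to\omega_C$; the sheaf-theoretic image $N=\mathrm{Im}(\phi)$ is a rank--one torsion-free subsheaf of $\omega_C$ on a smooth curve, hence invertible and of the form $\omega_C(-D)$ with $D\geq 0$ effective (and the kernel of $\Ff\twoheadrightarrow N$ is then automatically a line subbundle, so $N$ is a genuine quotient line bundle); finally $h^1(C,N)=h^0(C,\OO_C(D))\geq 1$, so $N$ is special. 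As a minor observation, your argument never uses the hypothesis $g\geq 1$, so it actually proves slightly more than the quoted statement.
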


\subsection{A result by M. Texidor} \label{ssec:teix}
If $d \ge 2g$, any rank--two vector bundle $\Ff$ on $C$
has $\ell(\Ff)\ge 2$ by Riemann--Roch theorem. Hence $B^2_C(d) =
U_C(d)$ in this case (cf. \cite[Note, p. 123]{TB0}). 
Then, if  $d\ge 2g$, 
it is no restriction to consider Brill--Noether loci
$B^\ell_C(d)$, with  $\ell \ge 2$. We record here the main result of \cite{TB}:

\begin{theorem}\label{thm:TB} If $\ell \ge 2$, $i \ge 2$, $C$ has 
general moduli, and either $\rho_d^\ell \ge 1$ and $d$ is odd, or $\rho_d^\ell
\ge 5$ and $d$ is even, then
$B^\ell_C(d)$ is not empty and of the expected dimension.
\end{theorem}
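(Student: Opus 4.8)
The plan is to realize the required bundles as extensions of line bundles, so that their cohomology and speciality can be read off from classical Brill--Noether theory on the general curve $C$, which is completely understood. This strategy is suggested by Proposition~\ref{prop:FG}: since we are seeking special bundles $\Ff$ (we want $i(\Ff)\ge i\ge 2$), it is natural to build them from a special quotient line bundle. Concretely, I would fix line bundles $M$ and $N$ on $C$ with $\deg(M)+\deg(N)=d$ and form an extension
\[
0\longrightarrow M\longrightarrow \Ff\longrightarrow N\longrightarrow 0,
\]
choosing the degrees and the (general) line bundles $M,N$ in suitable Brill--Noether loci of $\Pic(C)$ so that the long exact sequence in cohomology forces $h^1(C,\Ff)\ge i$, hence $\ell(\Ff)\ge\ell$ by Riemann--Roch.

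The first technical step is the cohomology computation. The long exact sequence gives $h^1(C,\Ff)=h^1(C,M)+h^1(C,N)-\rk(\delta)$, where $\delta\colon H^0(C,N)\to H^1(C,M)$ is the connecting map attached to the extension class $e\in\Ext^1(N,M)\cong H^1(C,M\otimes N^{-1})$. For a general curve the numbers $h^0,h^1$ of $M$ and $N$ are governed by the classical Brill--Noether number, so the only quantity left to control is the rank of $\delta$; a suitable general choice of $e$ makes $\delta$ of maximal rank, so that $i(\Ff)$ attains its minimal value $i$ for the general member of the family, pinning it down exactly. Matching $\deg(M),\deg(N),i(M),i(N)$ to the target data $(d,i,\ell)$ is a bookkeeping exercise, but it is here that the parity of $d$ first enters, since the admissible ways of splitting the degree $d$ between the two summands differ according to whether $d$ is odd or even.

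The crux, and the step I expect to be the main obstacle, is stability. A general extension of two line bundles need not be stable, so one must show that for general $e$ the bundle $\Ff$ admits no quotient line bundle of degree $\le d/2$. This is the point where the numerical hypotheses $\rho_d^\ell\ge 1$ (for $d$ odd) and $\rho_d^\ell\ge 5$ (for $d$ even) are really used: they guarantee enough room in the family of extensions to avoid all potentially destabilizing sub--line bundles, and the stronger bound in the even case reflects the fact that for even $d$ the strictly semistable locus $U_C(d)\setminus U_C^s(d)$ is nonempty (Proposition~\ref{prop:moduli}(ii)) and must be steered clear of. I would rule out destabilizing quotients by a dimension count, comparing the dimension of the locus of extensions factoring through a fixed destabilizing line bundle with the dimension of the whole family of $e$.

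Finally, to obtain a component of $B^\ell_C(d)$ of the expected dimension, I would count parameters. Since $\rho_d^\ell$ is always a lower bound for $\dim B^\ell_C(d)$ through the determinantal description, it suffices to exhibit a family of stable bundles as above whose image in $U_C(d)$ has dimension at most $\rho_d^\ell$; equality then follows. The parameters are the choices of $M$ and $N$ in the relevant Brill--Noether loci, together with the projectivized class $e$, modulo the isomorphisms identifying the resulting bundles, and the numerical hypotheses are precisely what make this count land on $\rho_d^\ell$. An alternative route, the one actually taken by Teixidor, is to degenerate $C$ to a rational $g$--cuspidal curve and construct the limiting sheaves combinatorially, transporting nonemptiness and the dimension estimate to the general $C$ by semicontinuity; the extension approach sketched here has the advantage that, via Lemma~\ref{lem:Petri}, it also opens the way to the generic smoothness which is the main point of the present paper.
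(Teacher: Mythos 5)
The first thing to say is that the paper does not prove this statement at all: Theorem~\ref{thm:TB} is quoted verbatim as the main result of \cite{TB}, whose proof (as the introduction records) degenerates $C$ to a rational $g$--cuspidal curve and analyses limits of the bundles there. What you have sketched is instead the strategy of the paper's own Theorem~\ref{thm:Main}, and the comparison in Remark~\ref{rem:existence}(iv) shows exactly why that strategy does not prove Teixidor's statement in the stated range. Your ``bookkeeping exercise'' is where the argument breaks: to run the extension construction you need a special quotient line bundle $L$ of degree $d_1$ with $h^1(C,L)=i$ existing on a general curve, which forces $\rho(g,d_1,d_1-g+i)\ge 0$, i.e.\ $d_1\le (g-i)(i+1)/i$, and stability forces $d<2d_1$. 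Together these give $d<2(g-i)(i+1)/i$, whereas the hypothesis $\rho_d^\ell\ge 1$ only requires $d\le \frac{i+2}{i}(2g-2)-i$; the discrepancy is of order $2g/i$, so a whole range of degrees covered by the theorem is inaccessible to this construction. In that missing range the bundle $\Ff$ has $h^1(\Ff)=i$ but no quotient line bundle of speciality $i$ (Proposition~\ref{prop:FG} only guarantees a special quotient, not one carrying all of the speciality), so one would have to let both $M$ and $N$ be special and control the corank of the coboundary $\delta\colon H^0(N)\to H^1(M)$ for a \emph{non-generic} extension class --- for a general class $\delta$ tends to have maximal rank, killing the speciality you are trying to create. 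You give no argument for this, and it is precisely the hard part.

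The closing dimension count is also logically insufficient as stated. Exhibiting a family of bundles inside $B^\ell_C(d)$ whose image has dimension at most $\rho_d^\ell$ does not bound the dimension of the component containing it from above unless that family is dense in the component; by Remark~\ref{rem:impo}, for $i\ge 2$ the constructed family has dimension $y<\rho_d^\ell$ and is \emph{not} dense in its component, so the count would prove nothing. The correct route --- the one the paper takes for Theorem~\ref{thm:Main} --- is to prove injectivity of the Petri map $P_\Ff$ at the constructed bundles (Lemma~\ref{lem:conclusion}, via specialization to $L\oplus N$ and the Gieseker--Petri theorem), which by Lemma~\ref{lem:Petri} gives smoothness and the expected dimension at those points and hence for the whole component. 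You mention this only as an afterthought; it should be the engine of the dimension statement, and even then it only yields the theorem in the smaller range above, not in Teixidor's.
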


\begin{remark}\label{rem:numTB} It is useful to express the numerical 
conditions in Theorem \ref{thm:TB} in terms of the speciality. 
Since $\ell= d - 2g + 2 + i \ge 2$, then $d \ge 2g-i$.
In addition, when $d$ is odd, one has
$\rho_d^\ell \ge 1$, which reads
\begin{equation}\label{eq:dodd}
d \le \frac{i+2}{i}(2g-2) - i;
\end{equation}
when $d$ is even one has $ \rho_d^\ell \ge 5$, i.e.
\begin{equation}\label{eq:deven}
d \le \frac{i+2}{i}(2g-2) - i - \frac{4}{i}.
\end{equation}
\end{remark}

\section{Examples of Brill--Noether loci}\label{S:BNnew}

In this section  we give examples
of generically smooth components of the expected dimension of Brill--Noether loci of speciality $i \geq 1$  in $U_C(d)$,  with $C$ a curve
of genus $g$ with general moduli.

\begin{theorem}\label{thm:Main} Let $g,i$ be a  integers such that 
\begin{equation}\label{eq:ip} i < \sqrt{g+4} - 2.\end{equation}
Let then $d$ and $d_1$ be integers such that 
\begin{equation}\label{eq:bounds1}
g+4 \leq  d_1  \le   (g-i) \frac{(i+1)}{i}
\end{equation} and
\begin{equation}\label{eq:bounds}
 d_1 + g + 3 \leq d < 2d_1 
\end{equation}Set $\ell=d-2g+2+i$.

If $C$ is a curve of genus $g$ with  general moduli,  
there is an irreducible component of $B^\ell_C(d)$
which is generically smooth,  of the expected dimension, containing 
points corresponding to stable, very-ample vector bundles $\Ff$,
with $i(\Ff)=i$, whose minimal degree line bundle quotients  have degree  $d_1$ and speciality $i$. 
\end{theorem}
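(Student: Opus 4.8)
The plan is to realize the desired bundles as extensions
\[
0 \to N \to \Ff \to L \to 0,
\]
with $L$ the prescribed minimal--degree quotient, an idea motivated by Proposition \ref{prop:FG}. First I would fix $L$ to be a general line bundle of degree $d_1$ and speciality $i$: such an $L$ exists on a curve with general moduli precisely because the upper bound in \eqref{eq:bounds1} is equivalent to the non-negativity of the classical Brill--Noether number $\rho(g,r_1,d_1)$ with $r_1 = d_1 - g + i$, and by the Gieseker--Petri theorem its Petri map $P_L\colon H^0(L)\otimes H^0(\omega_C \otimes L^{-1}) \to H^0(\omega_C)$ is injective. Next I would take $N$ to be a general line bundle of degree $d - d_1 \ge g+3$; since $d-d_1 > g-1$, the bundle $N$ is non-special, so $h^1(N)=0$. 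Because $d < 2d_1$ forces $\deg(N\otimes L^{-1}) = d-2d_1 < 0$, one has $\Ext^1(L,N)\cong H^1(N\otimes L^{-1})\ne 0$, so non-split extensions exist, and I would take $\Ff$ to be a general one.

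Before the Petri map I would check the qualitative claims for this general $\Ff$. The speciality is immediate: as $h^1(N)=0$, the sequence gives $h^1(\Ff)=h^1(L)=i$, and $0 \to H^0(N)\to H^0(\Ff)\to H^0(L)\to 0$ is exact. Stability, the fact that the maximal--degree sub-line-bundle of $\Ff$ is exactly $N$ (so the minimal--degree quotient has degree $d_1$), and very-ampleness all follow for a general extension class from the standard dimension counts on the locus of classes in $\Pp\,\Ext^1(L,N)$ over which a would-be sub-line-bundle $L(-D)$ splits off; the hypotheses $d<2d_1$ and $d\ge d_1+g+3$ guarantee this locus is a proper subvariety.

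The heart of the matter is the injectivity of $P_\Ff$, which by Lemma \ref{lem:Petri} is exactly generic smoothness and expected dimensionality of $B^\ell_C(d)$ at $[\Ff]$. The key simplification is that every co-section factors through $L$: dualizing the defining sequence and twisting by $\omega_C$ yields $0 \to \omega_C\otimes L^{-1}\to \omega_C\otimes \Ff^*\to \omega_C\otimes N^*\to 0$, and since $H^0(\omega_C\otimes N^*)=H^1(N)^*=0$ we get $H^0(\omega_C\otimes \Ff^*)\cong H^0(\omega_C\otimes L^{-1})$. Under this identification $P_\Ff$ equals the multiplication map $\mu\colon H^0(\Ff)\otimes H^0(\omega_C\otimes L^{-1})\to H^0(\omega_C\otimes \Ff\otimes L^{-1})$ followed by the injection induced by $L^{-1}\hookrightarrow \Ff^*$, so $P_\Ff$ is injective iff $\mu$ is. Filtering $\mu$ by the sub-line-bundle $N$ gives a commutative ladder whose quotient column is precisely $P_L$ and whose sub column is
\[
\mu_N\colon H^0(N)\otimes H^0(\omega_C\otimes L^{-1})\to H^0(\omega_C\otimes N\otimes L^{-1}).
\]
Since $P_L$ is injective by Gieseker--Petri, the snake lemma identifies $\ker\mu$ with $\ker\mu_N$.

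The main obstacle is therefore the injectivity of the single multiplication map $\mu_N$, and this is where I expect \eqref{eq:ip} to enter. After removing the base locus of $\omega_C\otimes L^{-1}$ I would pass to the kernel bundle $M$ of its evaluation map and identify $\ker\mu_N$ with $H^0(N\otimes M)$, reducing everything to the vanishing $H^0(N\otimes M)=0$ for general $N$ of degree $d-d_1$. For $i=1$ this is trivial (multiplication by a nonzero section), and for $i=2$ the kernel bundle is the line bundle $\omega_C^{-1}\otimes L$, so the claim becomes $H^0(N\otimes\omega_C^{-1}\otimes L)=0$; as $\deg(N\otimes\omega_C^{-1}\otimes L)=d-2g+2$ and the bounds force $d\le 3g-3$ once $i\ge 2$, this holds for general $N$. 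The genuinely delicate case is $i\ge 3$: there one must control $M$ (its semistability and slope) and combine genericity of $N$ with the Euler--characteristic inequality $\chi(N\otimes M)\le 0$—precisely what \eqref{eq:ip} together with \eqref{eq:bounds1}--\eqref{eq:bounds} are arranged to yield—to conclude $H^0(N\otimes M)=0$. Granting this, $\mu_N$, hence $\mu$ and $P_\Ff$, are injective, and Lemma \ref{lem:Petri} produces the generically smooth component of $B^\ell_C(d)$ of the expected dimension through $[\Ff]$.
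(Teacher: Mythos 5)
Your construction and your reduction of the Petri map coincide in substance with the paper's: you realize $\Ff$ as an extension of a general special $L\in W^{d_1-g+i}_{d_1}(C)$ by a general non-special $N$, identify $H^0(C,\omega_C\otimes\Ff^{*})$ with $H^0(C,\omega_C\otimes L^{*})$, and split the injectivity of $P_\Ff$ into the Gieseker--Petri injectivity of the Petri map of $L$ plus the injectivity of the multiplication map $\mu_{L,N}\colon H^0(C,N)\otimes H^0(C,\omega_C\otimes L^{*})\to H^0(C,\omega_C\otimes N\otimes L^{*})$. This is exactly the splitting of Lemma \ref{lem:conclusion} (the paper obtains it by specializing $\Ff$ to $L\oplus N$, you by filtering the actual extension; both are legitimate).

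The genuine gap is your treatment of $\mu_{L,N}$ when $i\ge 3$, a range the hypotheses do allow (any $g\ge 22$). You reduce $\ker\mu_{L,N}$ to $H^0(C,N\otimes M)$ with $M$ the kernel of the evaluation map of $\omega_C\otimes L^{*}$, and then assert that semistability and slope control of $M$, together with $\chi(N\otimes M)\le 0$ and genericity of $N$, yield the vanishing. But for a bundle of rank $\ge 2$, non-positive Euler characteristic plus genericity of the twisting line bundle does \emph{not} force $h^0=0$: one needs $M$ to satisfy generic cohomological vanishing (equivalently, to admit a theta divisor in the critical case), and Raynaud-type examples show this can fail even for semistable bundles. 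You establish neither the semistability of this particular kernel bundle nor the required generic vanishing, so the hardest case of the theorem is not proved. The paper sidesteps all of this with an elementary specialization you missed: since $d<2d_1$, one may degenerate $N$ to $N_0=L(-\Delta)$ with $\Delta$ effective of degree $2d_1-d>0$; then $H^0(C,N_0)\otimes H^0(C,\omega_C\otimes L^{*})$ embeds into the domain of the Petri map $\mu_L$ of $L$, the map $\mu_{L,N_0}$ becomes a restriction of $\mu_L$ landing in $H^0(C,\omega_C(-\Delta))\subseteq H^0(C,\omega_C)$, and injectivity follows from Gieseker--Petri for $L$ alone, whence injectivity for general $N$ by semicontinuity. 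A secondary, lesser issue: stability and the identification of $L$ as the minimal-degree quotient are only asserted via an unspecified dimension count on $\Pp(\Ext^1(L,N))$; the paper proves this (Lemma \ref{lem:step3}) by a normal-bundle count for unisecant curves on the scroll, showing a lower-degree section would force $d\le 2d_1-g+1$ and hence $d_1\ge 2g+2$, contradicting the speciality of $L$. That step is routine to complete, but the $i\ge 3$ Petri step is a real missing ingredient.
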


The proof of Theorem \ref{thm:Main} will
follow from a series of remarks and lemmas presented below. 

\begin{remark}\label {rem:existence} {\rm (i) Note that \eqref {eq:ip} implies  $g\ge 6$ if $i=1$ and $g\ge 13$ if $i\ge 2$. Moreover $i< \frac g 4$ since
$\sqrt{g+4} -2 \le \frac g 4$ for any $g$.
\medskip

\noindent (ii) The interval for the integer $d_1$ in  \eqref{eq:bounds1}  is in general not empty, since 
\begin{equation}\label{eq:ineq}
(g+3) i < (g-i)(i+1) \quad \text{ for any}\quad  i \ge 1.
\end{equation}
If $i=1$, this follows from  $g \geq 6$.  If $i \geq 2$, then \eqref {eq:ineq} 
is equivalent to $i^2 + 4 i - g <0$, which follows from \eqref {eq:ip}. \medskip

\noindent (iii) The inequalities  in \eqref{eq:bounds} are necessary for the stability of $\Ff$ and for the very-ampleness of the line bundle $N$ appearing in \eqref{eq:FUND!} below (cf. Lemmas \ref{lem:num} (i), \ref{lem:step3} and \ref{cl:va}). 
\medskip

\noindent (iv)  The bound
 \[
  d <  2 (g-i) \frac{(i+1)}{i}
  \]
  following from \eqref{eq:bounds1} and  \eqref {eq:bounds}, is in general slightly worse than \eqref {eq:dodd} and \eqref{eq:deven}, but the difference, for $i$ close to the upper bound in   \eqref {eq:ip}, is of the order of $\sqrt g$. \medskip
 
\noindent (v) The upper-bound in 
\eqref{eq:bounds1} implies the following inequality for the {\em Brill--Noether number}
\begin{equation}\label{lem:num} 
\rho(g, d_1, d_1 - g + i)\ge 0.
\end{equation}
}
 \end{remark}\medskip

Now we are going to produce the components of $B_C^\ell(d)$ announced in the statement 
of Theorem \ref {thm:Main}. From \eqref {lem:num}
we have $\dim(W^{d_1-g+i}_{d_1}(C))=\rho(g, d_1, d_1 - g + i) \ge 0$,
because $C$ has general moduli.
Consider  extensions
\begin{equation}\label{eq:FUND!}
0 \to N \to \Ff \to L \to 0,
\end{equation}
with $N \in {\rm Pic}^{d-d_1}(C)$ general and 
$L \in W^{d_1-g+i}_{d_1}(C)$ general 
(or any $L$ if $\rho(g, d_1, d_1- g + i)=0$), so that 
$h^1(C,L)=i$. By  \eqref{eq:bounds}, one has  $d-d_1 \geq g+3$; since $N \in {\rm Pic}^{d-d_1}(C)$ is general, one has $h^1(C,N) = 0$. 
Therefore $\Ff$ is a rank--two vector bundle of degree $d$ and speciality $i=i(\Ff)$, i.e. $[\Ff]\in B_C^\ell(d)$, with $\ell=d-2g+2+i$, and we can look at it as an element of $\Ext^1(L,N)$. 

\begin{lemma}\label{cl:va} In the above setting, any $\Ff \in Ext^1(L,N)$  is very ample on $C$.
\end{lemma}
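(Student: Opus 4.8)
The plan is to invoke the standard cohomological criterion for very ampleness of a rank--two bundle: $\Ff$ is very ample (that is, $\OO_{\Pp(\Ff)}(1)$ is very ample) if and only if for every length--two subscheme $Z\subset C$ the restriction map $H^0(C,\Ff)\to H^0(Z,\Ff|_Z)$ is surjective. The idea is to transfer this surjectivity onto the two line bundles $N$ and $L$ by restricting the defining extension \eqref{eq:FUND!} to $Z$. Tensoring \eqref{eq:FUND!} by $\OO_Z$ keeps it exact, since $L$ is locally free (so the sequence is locally split), giving $0\to N|_Z\to \Ff|_Z\to L|_Z\to 0$.

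Placing this alongside the long exact cohomology sequence of \eqref{eq:FUND!} produces a commutative ladder whose vertical arrows are the restriction maps. Because $h^1(C,N)=0$, the top row $0\to H^0(N)\to H^0(\Ff)\to H^0(L)\to 0$ is short exact, and the bottom row is short exact as just noted. A four--lemma chase then shows that $H^0(\Ff)\to H^0(\Ff|_Z)$ is surjective as soon as both $H^0(N)\to H^0(N|_Z)$ and $H^0(L)\to H^0(L|_Z)$ are surjective; given $b'\in \Ff|_Z$ one lifts its image in $L|_Z$ to $H^0(L)$, then to $H^0(\Ff)$, and corrects the resulting error, which lies in $N|_Z$, using surjectivity on the $N$--side. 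Thus it suffices to prove that the chosen $N$ and $L$ are each \emph{very ample line bundles}. Note that this sufficient condition depends only on $N$ and $L$ and not on the extension class, which is exactly why the statement holds for \emph{any} $\Ff\in\Ext^1(L,N)$, including the split one.

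For $N$ I would argue by a dimension count. Here $N$ is a general line bundle of degree $d-d_1\ge g+3$ (by \eqref{eq:bounds}), hence nonspecial. The locus of degree--$(d-d_1)$ line bundles that fail to separate some length--two $Z$ is swept out by the images of a Brill--Noether locus of degree $d-d_1-2$ times $C_2$ in $\Pic^{d-d_1}(C)$; its dimension is at most $2g-(d-d_1)+2$, which is strictly less than $g=\dim\Pic^{d-d_1}(C)$ precisely when $d-d_1\ge g+3$. Hence the general $N$ avoids this bad locus and is very ample.

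For $L$ the same strategy applies but is more delicate, and I expect this to be the main obstacle, since $L$ is \emph{special}. One counts dimensions inside $W^{r}_{d_1}(C)$ with $r=d_1-g+i$, which has the expected dimension $\rho(g,d_1,r)\ge 0$ because $C$ has general moduli and by \eqref{lem:num}. The sublocus of $L\in W^r_{d_1}(C)$ failing to separate some length--two $Z$ is dominated by $W^{r-1}_{d_1-2}(C)\times C_2$; using the identity $\rho(g,d_1,r)-\rho(g,d_1-2,r-1)=d_1-g$, this sublocus has dimension at most $\dim W^r_{d_1}(C)-(d_1-g-2)$, which is strictly smaller than $\dim W^r_{d_1}(C)$ because $d_1\ge g+4$ (and it is empty when $\rho=0$). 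Therefore a general $L$, or every $L$ when $\rho=0$, separates all length--two schemes and is very ample. Combining this with the very ampleness of $N$ and the four--lemma reduction above yields the very ampleness of $\Ff$. The crux throughout is that general moduli guarantees that all the auxiliary Brill--Noether loci carry their expected dimension, so that the hypotheses $d-d_1\ge g+3$ and $d_1\ge g+4$ make each bad locus negligible.
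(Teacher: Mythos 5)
Your proof is correct and follows the same route as the paper: reduce the very ampleness of $\Ff$ to that of the line bundles $N$ and $L$ (the reduction using the restriction to length--two subschemes and the vanishing $h^1(C,N)=0$, which the paper asserts without proof), and then establish that general $N$ and $L$ are very ample. The only difference is that where you carry out Brill--Noether dimension counts (which check out, including the identity $\rho(g,d_1,r)-\rho(g,d_1-2,r-1)=d_1-g$ and the treatment of the case $\rho=0$), the paper simply quotes \cite[(1.8) Theorem, p.~216]{ACGH}, using $h^0(C,N)\ge 4$ and $h^0(C,L)\ge 4$; your counts are a correct self-contained substitute for that citation.
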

\begin{proof} A sufficient condition for $\Ff$ to be very-ample is that both $L$ and $N$ are. 
By \cite[(1.8) Theorem, p. 216]{ACGH}, a sufficient condition for 
both $L$ and $N$ to be very ample on $C$ with general moduli is 
\[
h^0(C,N) = d-d_1 - g + 1 \ge 4,\quad h^0(C,L) = d_1 - g + 1 + i \geq 4.
\]
The first inequality holds by \eqref{eq:bounds}, the second by 
 \eqref{eq:bounds1}.  
\end{proof}

\begin{remark}\label{rem:comment} \normalfont{Note that  \eqref {lem:num} and
the proof of Lemma \ref {cl:va} yield
$g-4i=\rho(g, d_1, 3)\ge \rho(g, d_1, d_1- g + i)\ge 0$, so $i\le \frac g 4$ is a necessary condition 
for the ampleness of $L$ (see Remark \ref  {rem:existence}, (i)). } \end{remark}

A general bundle $\Ff \in \Ext^1(L,N)$ as above gives rise to
the projective bundle $\Pp(\Ff)\stackrel{f}{\to} C$, which is embedded, via 
 $\vert \OO_{\Pp(\Ff)}(1)\vert$,
as a smooth scroll $S$ of degree $d$ and sectional genus $g$
in $\Pp^r$,  with $r=r(\Ff) = d-2g+1+i$. The quotient 
$\Ff \to\!\!\!\!\! \to L$ corresponds to a section $C\stackrel{s}{\to} \Pp(\Ff)$
of $\Pp(\Ff)\stackrel{f}{\to} C$, whose image is a unisecant, irreducible curve $\Gamma\cong C$
(cf.\ \cite[\S\;V, Prop.\ 2.6 and 2.9]{Ha}). 
Since $h^ 1(C, N)=0$, then $\Gamma\subset S\subset \Pp^r$ is linearly normally embedded 
in a linear subspace of  dimension $d_1-g+i$, as a curve of 
degree $d_1$ and speciality $i$.

Given $L \in W^{d_1-g+i}_{d_1}(C)$,  $N \in {\rm Pic}^{d-d_1}(C)$ and $\Ff$  in
$\Pp(\Ext^1(L,N))$, the embedding $\Pp(\Ff)\to \Pp^r$ varies by projective
automorphisms of $\Pp^r$. Thus the surface $S$ varies, describing an irreducible
locally closed subset $\mathcal H_C(d,i)$ of the Hilbert scheme. 

\begin{remark}\label{rem:hilbertscheme} \normalfont{It is 
useful to describe $\mathcal H_C(d,i)$ in a different way. 

Let $L \in W^{d_1-g+i}_{d_1}(C)$ be general as above.
Let $M \in {\rm Pic}^{\delta}(C)$ be any line bundle of
degree $\delta >>0$. Consider the projective bundle 
$\Pp(L \oplus M)$, which embeds as a smooth scroll 
$\Sigma=\Sigma_{L,M}$ of degree $d_1 + \delta$ and sectional genus
$g$ in $\Pp^{r'}$, $r'=r(L \oplus M) = d_1 + \delta - 2g +
1 + i = r + \delta$, via  $\vert \OO_{\Pp(L \oplus M)}(1)\vert$.
By \cite[Thm. 3.11]{CCFMsp}, $\Sigma$ contains a unique special section $E$ 
of degree $d_1$ and speciality  $i$, corresponding to the quotient 
$L \oplus M \to\!\!\!\!\! \to L$. One has 
$E^2 = d_1 - \delta <<0$ (cf.\ \cite[\S\,5]{Ha}). 

One can choose $M$ in such a way that there are 
$\delta + d_1 - d$ linearly
independent points on $\Sigma$ such that, projecting
$\Sigma$ from their span, the
image of this projection is the surface $S \subset \Pp^r$
as above. In this projection $E$ is isomorphically
mapped to $\Gamma$.

Thus, $\mathcal H_C(d,i)$ can be  thought of 
as the family of all projections of scrolls of the form $\Sigma_{L,M}$,
with $L, M$ as above, from
the span of $\delta + d_1 - d$ general points on $\Sigma_{L,M}$. 
}\end{remark}

\begin{lemma} \label{lem:step3} In the above setting, $\Gamma$
is the unique section of minimal degree of the scroll $S$. Hence
the bundle $\Ff$ is stable.
\end{lemma}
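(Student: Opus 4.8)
The plan is to translate the two assertions---minimality and uniqueness of $\Gamma$, and stability of $\Ff$---into a single statement about sub-line-bundles of $\Ff$, and then to settle it by a dimension count in $\Ext^1(L,N)$. Sections of the scroll $\Pp(\Ff)\to C$ correspond to quotient line bundles $\Ff\to\!\!\!\!\!\to M$, the degree of the section in $\Pp^r$ being $\deg M$; dually they correspond to saturated sub-line-bundles $N'=\ker(\Ff\to\!\!\!\!\!\to M)$, with $\deg N'=d-\deg M$. Since $d<2d_1$ by \eqref{eq:bounds}, once I show that the minimal degree of a quotient line bundle equals $d_1$ and is attained only by $L$, both conclusions follow at once: minimality and uniqueness of $\Gamma$ are immediate, and every quotient then has degree $\ge d_1>d/2$, which is exactly stability in the sense of \S\ref{ssec:semist}.

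First I would classify the saturated sub-line-bundles $N'\subset\Ff$ according to the composite $N'\to\Ff\to\!\!\!\!\!\to L$. If this map vanishes then $N'\subseteq N$, and since $N$ is already saturated (because $\Ff/N=L$ is torsion free) this forces $N'=N$, i.e. $M\cong L$ and the section is $\Gamma$. If instead the composite is nonzero it is injective, so $N'\cong L(-D)$ for a unique effective divisor $D$, of degree $\deg D=\deg M-(d-d_1)$; hence $\deg M=(d-d_1)+\deg D$, and a quotient of degree $<d_1$, or a second quotient of degree $=d_1$, can exist only if such an $N'=L(-D)$ with $\deg D\le 2d_1-d$ lifts to a sub-bundle of $\Ff$.

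The heart of the argument is to rule out these lifts for the general extension class $e\in\Ext^1(L,N)$. Writing $A:=N\otimes L^{-1}$, so that $\Ext^1(L,N)\cong H^1(C,A)$ with $\deg A=d-2d_1=:-k_0<0$, and hence $h^0(A)=0$ and $h^1(A)=g-1+k_0$, the inclusion $L(-D)\hookrightarrow L$ lifts to $\Ff$ precisely when $e$ lies in the subspace $V_D:=\ker\big(H^1(A)\to H^1(A(D))\big)$. From the cohomology sequence of $0\to A\to A(D)\to A(D)|_D\to 0$ this kernel is the image of $H^0(A(D)|_D)$, so $\dim V_D\le\deg D$. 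Letting $D$ range over ${\rm Sym}^k(C)$ for $k\le k_0$, the locus of bad classes has dimension at most $k+(k-1)=2k-1\le 2k_0-1$ inside $\Pp(H^1(A))$, whose dimension is $g+k_0-2$. Thus a general $e$ avoids every $\Pp(V_D)$ as soon as $2k_0-1<g+k_0-2$, i.e. $k_0<g-1$.

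It remains to verify this inequality from the hypotheses, which I expect to be the only genuinely delicate point. By \eqref{eq:bounds} one has $d\ge d_1+g+3$, whence $k_0=2d_1-d\le d_1-g-3$; and by \eqref{eq:bounds1} (with $i\ge 1$) one has $d_1\le(g-i)(i+1)/i\le 2g-2$, so that $k_0\le g-5<g-1$, as required. The main obstacle is therefore the careful bookkeeping: correctly identifying $V_D$ and bounding its dimension, checking that the two types of sub-line-bundle genuinely exhaust all sections, and confirming that the numerical bounds of the theorem are precisely what forces $k_0<g-1$.
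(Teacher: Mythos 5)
Your proposal is correct, but it proves the lemma by a genuinely different route from the paper's. The paper argues geometrically: it realizes $S$ as the projection of the auxiliary scroll $\Sigma_{L,M}=\Pp(L\oplus M)$ of Remark \ref{rem:hilbertscheme} from $\delta+d_1-d$ general points, lifts a hypothetical section $\Gamma'$ of degree $d_1-\nu$ to a unisecant $\Delta\equiv E+(h-\nu)F$ on $\Sigma$ passing through $h$ of those points, and plays off $\Delta\cdot E\ge 0$ against the number of conditions that $h$ general points impose on the (non-special) family of such unisecants; the resulting inequality $d\le 2d_1-g+1$ contradicts $d\ge d_1+g+3$ together with $d_1\le 2g-2$ (speciality of $L$). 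Your argument is instead the cohomological one in the style of Lange--Narasimhan: a quotient of degree $\le d_1$ other than $L$ corresponds to a lift of some $L(-D)\hookrightarrow L$ to $\Ff$, the lifting locus in $\Pp(\Ext^1(L,N))=\Pp(H^1(A))$ is $\Pp(V_D)$ with $\dim V_D\le \deg D$, and the union of these over $D\in {\rm Sym}^k(C)$, $k\le k_0:=2d_1-d$, has dimension at most $2k_0-1$, which is less than $\dim \Pp(H^1(A))=g+k_0-2$ precisely when $k_0\le g-2$. It is worth observing that the two proofs hinge on exactly the same numerical fact (both in effect use $2d_1-d\le d_1-g-3\le g-5$, coming from \eqref{eq:bounds}, \eqref{eq:bounds1} and the speciality of $L$); only the packaging differs. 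Your version is self-contained, avoids the auxiliary scroll and the generality of the projection points, and identifies explicitly which extension classes fail; the paper's version has the advantage of reusing the projection description of $\mathcal H_C(d,i)$ that it needs elsewhere. Both establish the statement only for $\Ff$ general in $\Ext^1(L,N)$, which is what the ``above setting'' provides, and both deduce stability from minimality of $\Gamma$ via $d<2d_1$ in the same way.
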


\begin{proof} Assume, by contradiction,  there
is a section $\Gamma' \subset S$  of degree $d_1 - \nu$, with
$\nu\ge 0$.

Let $\Sigma=\Sigma_{L,M}$ be a scroll as in Remark
\ref {rem:hilbertscheme} and let   
$\varphi : \Sigma \dasharrow S$ be the projection of $\Sigma$ to $S$
from suitable $\delta + d_1 - d$ general points on it. Let 
$X$ be the set of these points.
Then $\Gamma'$ is the image via $\varphi$ of a section
$\Delta \neq E$ of degree $d_1 - \nu + h$ passing through a subset $Y\subseteq X$
of  $h$  points, for some $h\ge 0$. If we denote by
$F$ the ruling of $\Sigma$, then $\Delta \equiv E + (h-\nu)
F$. Then:
\begin{itemize}
\item[(i)] $\Delta\cdot  E \ge 0$ implies $E^2 + h - \nu \ge 0$,  therefore  $\Delta^2 = E^2 + 2 (h-\nu) \ge -E^ 2>>0$;

 \item[(ii)] hence $h^1(\Delta, \N_{\Delta\vert  \Sigma}) = 0 $ so $h^0(\Delta, \N_{\Delta\vert  \Sigma}) =
d_1 - \delta + 2 (h - \nu) - g + 1$. Since $Y$ consists of  $h$ general points, 
we must have $d_1 - \delta + 2 (h - \nu) - g + 1 \ge h$,
i.e.  $h - 2 \nu \ge \delta - d_1 + g - 1$.
\end{itemize}

Putting  the above  inequalities together, we have
\[
\delta + d_1 - d \ge h \ge h - 2 \nu \ge \delta - d_1 + g - 1 \quad \text {hence}\quad d \leq 2d_1 - g + 1
\]
which, by the first inequality in \eqref{eq:bounds},  implies $d_1 \ge 2g+2$,  contrary to the fact that $L$ is special. 
This proves the first assertion. Then the stability of $\Ff$  
follows from $d<2d_1$ in \eqref {eq:bounds}.
\end{proof}

\begin{remark} \label{rem:dimension} \normalfont{Let us 
compute
$y:=\dim(\mathcal H_C(d,i))-\dim({\rm PGL}(r+1,\C))$. 
A scroll $S$ corresponding to a point of 
$\mathcal H_C(d,i)$ is of the type $\Pp(\Ff)$, with $\Ff$ 
an extension as in \eqref {eq:FUND!}. 
By Lemma \ref {lem:step3}, this extension is \emph{essentially unique},
i.e. two of them correspond to the same point of $\Pp(\Ext^1(L,N))$ (cf.
\cite[p. 31]{Fr}). Therefore $y$ is the sum of the following  quantities:

\begin{itemize}
\item  $\rho(g, d_1, d_1 - g + i)$, i.e. the number of
parameters for the line bundle $L$;

\item $g$, i.e. the number of parameters for the line bundle $N$; 

\item $\dim(\Pp(\Ext^1(L,N)))=2 d_1 - d + g -
2$: indeed,  $\deg(N-L) = d - 2 d_1
<0$, thus $h^1(C, N\otimes L^{*}) = 2 d_1 - d + g -
1$.
\end{itemize} 

Consider the 
\emph{modular map} $\mu: \mathcal H_C(d,i)\dasharrow B^\ell_d(C)$
sending the point corresponding to $S\cong \Pp(\Ff)$ to $[\Ff]$.
This is well defined  since $S\cong \Pp(\Ff)\cong \Pp(\Ff')$ implies
$\Ff\cong\Ff'$. The fibres of
$\mu$ are  orbits by the ${\rm PGL}(r+1,\C)$--action on $\mathcal H_C(d,i)$. Therefore $y$
is the dimension of the image of $\mu$, hence $\dim(B^\ell_d(C))\ge y$. 
}\end{remark}

The next lemma shows that
the image of $\mu$ lies in a component of $B^\ell_d(C)$ which is generically smooth and of the expected dimension, thus 
concluding the proof of Theorem \ref {thm:Main}.

\begin {lemma}\label{lem:conclusion} Let $\Ff$ be a bundle appearing 
in \eqref{eq:FUND!} with $L \in W^{d_1-g+i}_{d_1}(C)$
and $N \in {\rm Pic}^{d-d_1}(C)$ general. Then the Petri map $P_\Ff$ is injective.
\end{lemma}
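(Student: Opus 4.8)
The plan is to prove injectivity of the Petri map
\[
P_\Ff : H^0(C,\Ff)\otimes H^0(C,\omega_C\otimes\Ff^*)\longrightarrow H^0(C,\omega_C\otimes\Ff\otimes\Ff^*)
\]
by exploiting the extension structure \eqref{eq:FUND!} to decompose both the source and the target into pieces built from the line bundles $L$ and $N$, and then to reduce the statement to the injectivity of the ordinary (line-bundle) Petri maps of $L$ and $N$ on the general curve $C$. Since $C$ has general moduli, the classical Gieseker--Petri theorem guarantees that the Petri maps of all line bundles on $C$ are injective, and this will be the engine behind the argument.

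First I would tensor the exact sequence \eqref{eq:FUND!} with $\Ff^*$ and with $\omega_C\otimes\Ff^*$ to obtain filtrations of $H^0(C,\Ff)$, of $H^0(C,\omega_C\otimes\Ff^*)$, and of the target $H^0(C,\omega_C\otimes\Ff\otimes\Ff^*)$ by subspaces coming from $N$ and quotients coming from $L$. Concretely, from $0\to N\to\Ff\to L\to 0$ one reads off a sub/quotient structure on $H^0(\Ff)$ (with sub $H^0(N)$ and quotient inside $H^0(L)$), and dually $\Ff^*$ sits in $0\to L^*\to\Ff^*\to N^*\to 0$, giving a filtration on $H^0(\omega_C\otimes\Ff^*)$ with pieces $H^0(\omega_C\otimes L^*)=H^1(L)^*$ and $H^0(\omega_C\otimes N^*)=H^1(N)^*$. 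Because $N$ is general of degree $d-d_1\ge g+3$, we have $h^1(N)=0$ by the computation preceding Lemma \ref{cl:va}, so the $\omega_C\otimes N^*$ piece contributes nothing to cohomology; this is precisely why the bound $d-d_1\ge g+3$ in \eqref{eq:bounds} enters. Thus the source simplifies considerably, and the Petri map $P_\Ff$ gets organized into a block-triangular shape whose diagonal blocks are the Petri maps $P_L$ and $P_N$ (or rather the piece of $P_N$ that survives).

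Next I would argue that an element of the kernel of $P_\Ff$, read through this filtration, must have its leading term killed by the injective diagonal map $P_L:H^0(L)\otimes H^0(\omega_C\otimes L^*)\to H^0(\omega_C\otimes L\otimes L^*)$ (Gieseker--Petri for $L$), forcing that leading term to vanish; one then descends to the next graded piece, where the relevant map involves $P_N$ or the mixed term $H^0(N)\otimes H^0(\omega_C\otimes L^*)\to H^0(\omega_C\otimes N\otimes L^*)$. The mixed cross-terms are multiplication maps into $H^0(\omega_C\otimes N\otimes L^*)$, and their injectivity on the relevant subspaces should again follow from a base-point-free pencil or general-position argument, using that $N$ and $L$ are general and that $\omega_C\otimes N\otimes L^*$ has the expected cohomology. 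Iterating through the finitely many graded pieces forces the whole kernel element to vanish.

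The main obstacle I anticipate is controlling the mixed off-diagonal term, i.e.\ the multiplication map
\[
H^0(C,N)\otimes H^0(C,\omega_C\otimes L^*)\longrightarrow H^0(C,\omega_C\otimes N\otimes L^*),
\]
which is not literally a classical Petri map and whose injectivity does not follow verbatim from Gieseker--Petri. Here I expect one must use the genericity of the extension class in $\Ext^1(L,N)$ together with the essential uniqueness from Lemma \ref{lem:step3}, or a direct base-point analysis showing that no nonzero tensor can be annihilated; the degree bounds \eqref{eq:bounds1} and \eqref{eq:bounds}, which guarantee that $L$ and $N$ are very ample (Lemma \ref{cl:va}) and that $h^1(N)=0$, are exactly what make this term manageable. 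Establishing the injectivity of this cross-term—and verifying that the block structure genuinely splits the kernel analysis into the diagonal and one off-diagonal piece—is the technical heart of the proof; once it is in place, the conclusion that $P_\Ff$ is injective, and hence by Lemma \ref{lem:Petri} that $B^\ell_C(d)$ is smooth of the expected dimension at $[\Ff]$, is immediate.
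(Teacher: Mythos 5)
Your overall architecture matches the paper's: reduce $P_\Ff$ to the classical Petri map $\mu_L$ of $L$ (injective by Gieseker--Petri on a general curve) plus the cross multiplication map $\mu_{L,N}\colon H^0(C,N)\otimes H^0(C,\omega_C\otimes L^*)\to H^0(C,\omega_C\otimes N\otimes L^*)$. The paper obtains this decomposition slightly more cheaply than your filtration: since $h^1(C,N)=0$ forces $h^1(C,\Ff)=i$ for \emph{every} $\Ff\in\Ext^1(L,N)$, the domain of $P_\Ff$ has constant dimension, so by semicontinuity one may specialize to the split bundle $\Ff_0=L\oplus N$, where $P_{\Ff_0}=\mu_L\oplus\mu_{L,N}$ on the nose. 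Your block-triangular version is workable too; note only that there is no surviving ``$P_N$ block'' at all, precisely because $h^1(C,N)=0$ kills the factor $H^0(C,\omega_C\otimes N^*)$.

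The genuine gap is exactly where you flag it: the injectivity of $\mu_{L,N}$, which you leave as ``the technical heart'' with only a guess at the tools. Moreover, your guess points in the wrong direction: $\mu_{L,N}$ depends only on the pair $(L,N)$ and not on the extension class, so genericity of the class in $\Ext^1(L,N)$ and the uniqueness statement of Lemma \ref{lem:step3} cannot bear on it; and a base-point-free-pencil-type computation only identifies $\ker\mu_{L,N}$ with the sections of a twist of the syzygy bundle of $N$, whose vanishing is not automatic. The paper's key idea is a second semicontinuity argument, this time in $N$: specialize $N$ to $N_0=L(-\Delta)$ with $\Delta$ an effective divisor of degree $2d_1-d>0$ (this is where the hypothesis $d<2d_1$ from \eqref{eq:bounds} is used). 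Tensoring $0\to L(-\Delta)\to L\to L|_{\Delta}\to 0$ with $H^0(C,\omega_C\otimes L^*)$ exhibits $\mu_{L,N_0}$ as the restriction of $\mu_L$ to the subspace $H^0(C,L(-\Delta))\otimes H^0(C,\omega_C\otimes L^*)$, hence it is injective by Gieseker--Petri for $L$, and injectivity for general $N$ follows by semicontinuity. Without this (or an equivalent) argument for the cross term, your proof is incomplete.
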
 

\begin{proof} For all $\Ff\in \Ext^1(L,N)$, one has $h^1(C, \Ff)=i$, hence the domain of
$P_\Ff$ has constant dimension $i(d-2g+2+i)$. Therefore, by semicontinuity, it suffices to prove the assertion
for a particular such $\Ff$, even if the dimension of the target of $P_\Ff$ jumps up. We
will specialize to  $\Ff_0 = L \oplus N$. We have
$$H^0(C, \Ff_0) = H^0(C, L)\oplus H^0(C, N) \;\; {\rm and} \;\; H^0(C, \omega_C \otimes \Ff_0^{*}) = H^0(C, \omega_C \otimes L^{*}).$$
So the domain of $P_{\Ff_0}$ is 
$$H^0(C, \Ff_0) \otimes H^0(C, \omega_C \otimes \Ff_0^{*}) = \left(H^0(C, L) \otimes H^0(C, \omega_C \otimes L^{*})\right)
\oplus \left(H^0(C, N) \otimes H^0(C, \omega_C \otimes
L^{*})\right),$$
whereas the target is
$$H^0(C, \omega_C \otimes \Ff_0 \otimes \Ff_0^{*}) = H^0(C, \omega_C) \oplus H^0(C, \omega_C \otimes L \otimes N^{*})
\oplus H^0(C, \omega_C \otimes N \otimes L^{*}) \oplus
H^0(C, \omega_C).$$
The map $P_{\Ff_0}$ can be
written on decomposable tensors as
$$(a \otimes b, \alpha \otimes \beta) \stackrel{P_{\Ff_0}}{\longrightarrow} (ab, 0 , \alpha \beta, 0),$$
for $a \otimes b \in H^0(C, L) \otimes H^0(C, \omega_C \otimes L^{*})$
and for $\alpha \otimes \beta \in H^0(C, N) \otimes H^0(C, \omega_C
\otimes L^{*})$. In other words,
$$P_{\Ff_0} = \mu_L \oplus \mu_{L,N}$$
where $\mu_L : H^0(C, L) \otimes H^0(C, \omega_C \otimes L^{*}) \to H^0(C, \omega_C)$ is the Petri map for $L$ and
$\mu_{L,N} : H^0(C, N) \otimes H^0(C, \omega_C \otimes L^{*}) \to
H^0(C, \omega_C \otimes N \otimes L^{*})$ is the multiplication
map.

Since $C$ has general moduli, the map $\mu_L$ is injective. We need to prove that $\mu_{L,N}$ is also injective. To do this, it suffices to show that $\mu_{L,N}$ is injective for some
particular line bundle $N_0$ of degree $d-d_1$, even if $N_0$ becomes special and therefore $h^ 0(C,N_0)>h^ 0(C,N)=d-d_1-g+1$. Indeed, when a general $N$ flatly tends to $N_0$, the vector spaces $H^ 0(C,N)$  and $H^0(C, \omega_C \otimes N \otimes L^{*})$
will  respectively tend to subspaces $V\subseteq H^ 0(C,N_0)$ and $W\subseteq
H^0(C, \omega_C \otimes N_0 \otimes L^{*})$ of the same dimensions,  
and the limit of $\mu_{L,N}$ will be the multiplication map
$\mu_{L,V} : V \otimes H^0(C, \omega_C \otimes L^{*}) \to W$. Hence 
$\mu_{L,V}$ (hence, by semicontinuity, $\mu_{L,N}$) is injective if $\mu_{L,N_0}$ is.

Let $\Delta \in
{\rm Div}^{2d_1-d}(C)$ be an effective divisor.
Let $N_0 = L(-\Delta) \in {\rm Pic}^{d-d_1}(C)$ and set $\mu_0=\mu_{L,N_0}$.
If we tensor the exact sequence
$$0 \to L(-\Delta)\cong N_0 \to L \to L|_{\Delta} \to 0$$
by $H^0(C,\omega_C \otimes L^{*})$, we get
the  commutative diagram with exact rows

\[
\begin{array}{rccccc}
0 \to & H^0(C,N_0) \otimes H^0(C, \omega_C \otimes L^{*}) &
\longrightarrow & H^0(C, L) \otimes H^0(C, \omega_C \otimes L^{*})\\
 & \downarrow^{\mu_{0}} & & \downarrow^{\mu_L} \\
 0 \to & H^0(C, \omega_C (-\Delta)) & \longrightarrow & H^0(C, \omega_C ) 
\end{array}
\]
Since $\mu_L$ is injective, $\mu_0$ is also
injective, which ends our proof. \end{proof}

\begin{remark}\label{rem:impo} Except in the case $i=1$ and $d_1 = 2g-2$, the general point of a component of $B^\ell_d(C)$ we constructed  does not lie in the image of  $\mu$. Indeed, by recalling \eqref {eq:bn}, one has
$$y - \rho^\ell_d = d(i-1) - d_1(i-2) - (g -1) (i+1).$$\medskip

\noindent (i) When $i=1$, one has $y- \rho^{d-2g+3}_d = d_1 - 2 (g-1)$, which is zero if and only if $d_1 = 2g-2$.  If  $i=1$ consider a general vector bundle $\Ff$ in our component. By
Proposition \ref {prop:FG}, there is an exact sequence of the form \eqref {eq:FUND!} with $h^ 1(C,L)>0$, hence $h^ 1(C,L)=1$. Then the above argument shows that $L\cong \omega_C$. \medskip

\noindent (iii) When $i \geq 2$, by $d < 2d_1$ and \eqref {eq:bounds1} one has $$ y - \rho^\ell_d < i (d_1 - g + 1) + 1 - g \leq 1 - i^2 < 0.$$ 

The problem of describing the general element of a component of $B^\ell_d(C)$ we constructed when $i>1$ (the case $i=1$ is treated in (i)) looks interesting and we plan to come back to it in a future research. 
\end{remark}

\begin{corollary}\label{prop:Main} Let  $C$ a general curve of genus $g\ge 6$. 
For any $3g+1 \le d \le 4g-5$, there is a component of $B^{d-2g+3}_d(C)$, which is generically smooth and of the expected dimension, whose general point  corresponds to a very ample, stable vector bundle $\Ff$ of speciality $1$, fitting in an exact sequence $$0 \to N \to \Ff \to \omega_C \to 0$$where $\omega_C$ is the minimal degree quotient line bundle of $\Ff$ and $N \in Pic^{d-2g+2}(C)$ is general. 
\end{corollary}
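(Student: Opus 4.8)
The plan is to derive Corollary \ref{prop:Main} as the special case $i=1$ of Theorem \ref{thm:Main}, simply by verifying that the stated range $3g+1 \le d \le 4g-5$ (with $g \ge 6$) is covered by a suitable choice of $d_1$, and then by identifying the minimal-degree quotient line bundle. First I would set $i=1$ throughout, so that the speciality hypothesis \eqref{eq:ip} becomes $1 < \sqrt{g+4}-2$, i.e. $g > 5$, which is exactly the assumption $g \ge 6$; this matches Remark \ref{rem:existence}(i). With $i=1$ one has $\ell = d-2g+2+1 = d-2g+3$, so the Brill--Noether locus in question is indeed $B^{d-2g+3}_C(d)$.

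The heart of the argument is to produce an admissible $d_1$. Setting $i=1$ in \eqref{eq:bounds1} gives $g+4 \le d_1 \le 2(g-1) = 2g-2$, and \eqref{eq:bounds} becomes $d_1+g+3 \le d < 2d_1$. The natural choice is $d_1 = 2g-2$, i.e. taking $L = \omega_C$ as the minimal-degree quotient; this is legitimate since $g+4 \le 2g-2$ for $g \ge 6$. With $d_1 = 2g-2$ the constraints \eqref{eq:bounds} read $3g+1 \le d < 4g-4$, that is $3g+1 \le d \le 4g-5$, which is precisely the range in the statement. I would check that $L = \omega_C$ lies in $W^{d_1-g+i}_{d_1}(C) = W^{g-1}_{2g-2}(C)$: indeed $\omega_C$ has degree $2g-2$ and $h^0(C,\omega_C) = g = (g-1)+1$, so $r(\omega_C) = g-1$, and its speciality is $h^1(C,\omega_C)=1=i$. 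Theorem \ref{thm:Main} then directly yields an irreducible component of $B^{d-2g+3}_C(d)$ that is generically smooth, of the expected dimension, and whose general point corresponds to a stable, very ample bundle $\Ff$ of speciality $1$ whose minimal-degree quotient line bundle has degree $2g-2$ and speciality $1$.

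It remains to pin down that this minimal-degree quotient is $\omega_C$ itself and that $N \in {\rm Pic}^{d-2g+2}(C)$ is general, giving the displayed exact sequence $0 \to N \to \Ff \to \omega_C \to 0$. This is exactly the content of Remark \ref{rem:impo}(i): for $i=1$ and a general $\Ff$ in the component, Proposition \ref{prop:FG} provides a sequence of the form \eqref{eq:FUND!} with a special quotient line bundle $L$, forcing $h^1(C,L)=1$, and the dimension-count argument in that remark identifies $L \cong \omega_C$. So I would invoke Remark \ref{rem:impo}(i) to conclude that the general member of the component genuinely fits in the asserted sequence with quotient $\omega_C$, the minimal-degree quotient by Lemma \ref{lem:step3}, and with $N = \Ff \otimes \omega_C^{*}$ of degree $d-(2g-2)=d-2g+2$, general in ${\rm Pic}^{d-2g+2}(C)$.

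The main obstacle, such as it is, is not analytic but bookkeeping: one must confirm that substituting $i=1$ and $d_1=2g-2$ turns every hypothesis of Theorem \ref{thm:Main} into a condition implied by $g \ge 6$ and $3g+1 \le d \le 4g-5$, with no boundary case (for instance the endpoint $d=4g-5$ versus the strict inequality $d<2d_1=4g-4$) slipping out of range. The only genuinely substantive point is the identification of the quotient with $\omega_C$ rather than an arbitrary special $L$, and this is already handled by Remark \ref{rem:impo}(i); so the corollary follows with essentially no new work beyond this specialization and verification.
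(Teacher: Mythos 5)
Your proposal is correct and follows exactly the route the paper intends (the paper gives no separate proof of the corollary): specialize Theorem \ref{thm:Main} to $i=1$, $d_1=2g-2$, check that \eqref{eq:ip}, \eqref{eq:bounds1}, \eqref{eq:bounds} reduce to $g\ge 6$ and $3g+1\le d\le 4g-5$, and use Remark \ref{rem:impo}(i) together with Proposition \ref{prop:FG} to identify the minimal-degree quotient of the general member of the component with $\omega_C$. All the numerical verifications check out, including the endpoint $d=4g-5$ against the strict inequality $d<4g-4$.
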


\section{Applications to Hilbert schemes of scrolls}\label{S:HSL}

In this section we use Theorem \ref{thm:Main} to study
some components of  Hilbert schemes of special scrolls. 

\subsection{Normal bundle cohomology} \label{ssec:normal} Here we prove the following:

\begin{proposition}\label{prop:tghilbi}
Assumptions as in Theorem \ref{thm:Main}. Let $r = d-2g+1+i$ and 
$S \subset \Pp^r$ be a smooth, linearly normal, special scroll of
degree $d$, genus $g$, speciality $i$, with general moduli, which corresponds to a general 
point of  $\mathcal H_C(d,i)$ as in Remark \ref{rem:hilbertscheme}. 
 If $\N_{S\vert  \Pp^{r}}$ is the normal bundle of $S$ in
$\Pp^{r}$, then:
\begin{itemize}
\item[(i)] $h^0( S, \N_{S\vert  \Pp^{r}}) = 7(g-1) + (r +1) (r +1 - i)$;
\item[(ii)] $h^1( S, \N_{S\vert  \Pp^{r}}) = 0 $;
 \item[(iii)] $h^2( S,
\N_{S\vert  \Pp^{r}}) = 0$.
\end{itemize}
\end{proposition}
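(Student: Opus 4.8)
The plan is to play the normal bundle sequence $0 \to T_S \to T_{\Pp^r}\vert_S \to \N_{S\vert\Pp^r} \to 0$ against the Euler sequence of $\Pp^r$ restricted to $S$, namely $0 \to \OO_S \to \OO_S(1)^{\oplus(r+1)} \to T_{\Pp^r}\vert_S \to 0$, and to reduce every cohomology group to cohomology of sheaves on $C$ through the scroll map $f : S = \Pp(\Ff) \to C$. Since $f_*\OO_S(1) = \Ff$ and $R^1f_*\OO_S(1) = 0$, one has $H^j(S,\OO_S(1)) = H^j(C,\Ff)$, so $h^0 = \ell$, $h^1 = i$, $h^2 = 0$; likewise $f_*\OO_S = \OO_C$ and $R^1 f_*\OO_S = 0$ give $H^j(S,\OO_S) = H^j(C,\OO_C)$. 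First I would compute $\chi(\N_{S\vert\Pp^r})$ from these two sequences: the Euler sequence yields $\chi(T_{\Pp^r}\vert_S) = (r+1)\chi(C,\Ff) - \chi(C,\OO_C) = (r+1)(d-2g+2) + g - 1$, while $\chi(T_S) = 6(1-g)$ (either from Hirzebruch--Riemann--Roch using $K_S^2 = 8(1-g)$ and $\chi(\OO_S) = 1-g$, or from the relative tangent sequence $0 \to T_{S/C} \to T_S \to f^*T_C \to 0$ together with $f_*T_{S/C} = {\rm Sym}^2\Ff \otimes \det\Ff^{-1}$). Subtracting and using $d-2g+2 = r+1-i$ gives $\chi(\N_{S\vert\Pp^r}) = 7(g-1) + (r+1)(r+1-i)$, which is exactly the value claimed in (i); so (i) will follow once the vanishings (ii) and (iii) are in hand.

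Statement (iii) is immediate: from the Euler sequence $H^2(T_{\Pp^r}\vert_S)$ is squeezed between $H^2(S,\OO_S(1)) = H^2(C,\Ff) = 0$ and $H^3(S,\OO_S) = 0$, hence $H^2(T_{\Pp^r}\vert_S) = 0$; then the normal bundle sequence gives $H^2(\N_{S\vert\Pp^r}) \hookrightarrow H^3(T_S) = 0$, since $S$ is a surface, so $h^2 = 0$. For (ii), the normal bundle sequence reads $H^1(T_S) \xrightarrow{u} H^1(T_{\Pp^r}\vert_S) \to H^1(\N_{S\vert\Pp^r}) \to H^2(T_S) \to H^2(T_{\Pp^r}\vert_S) = 0$. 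I would check $h^2(T_S) = 0$ using the relative tangent sequence, as both $H^2(T_{S/C}) = H^2(C, {\rm Sym}^2\Ff \otimes \det\Ff^{-1})$ and $H^2(f^*T_C) = H^2(C,T_C)$ vanish because $C$ is a curve. With $h^2(T_S) = 0$ the sequence collapses to $h^1(\N_{S\vert\Pp^r}) = \dim\coker(u)$, so (ii) becomes equivalent to surjectivity of the restriction map $u : H^1(T_S) \to H^1(T_{\Pp^r}\vert_S)$.

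This surjectivity is the heart of the matter and the step I expect to be the main obstacle. Indeed, from the Euler sequence $H^1(T_{\Pp^r}\vert_S) = \coker\big(H^1(C,\OO_C) \to H^1(C,\Ff)^{\oplus(r+1)}\big)$, a space which is in general nonzero (of dimension of order $gi$); so one cannot hope for $H^1(T_{\Pp^r}\vert_S) = 0$, and the argument must genuinely use the geometry of the embedded scroll rather than crude vanishing. To handle it I would argue by specialization, in the spirit of the proof of Lemma \ref{lem:conclusion}. Since all bundles $\Ff \in \Ext^1(L,N)$ of \eqref{eq:FUND!} have constant $h^0$ and $h^1$, the scrolls $\Pp(\Ff)$ form a flat family in $\Pp^r$ whose central fibre is the decomposable scroll $S_0 = \Pp(L \oplus N)$, still smooth and nondegenerate by Lemma \ref{cl:va}. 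As $\N_{S_t\vert\Pp^r}$ varies in a flat family, semicontinuity reduces (ii) to proving $h^1(\N_{S_0\vert\Pp^r}) = 0$.

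On $S_0$ every sheaf above splits: $\OO_{S_0}(1) = L \oplus N$, and $f_*T_{S_0/C} = (L\otimes N^{-1}) \oplus \OO_C \oplus (N\otimes L^{-1})$, so the entire diagram of Euler and normal sequences breaks into direct summands, reducing the required vanishing to the cohomology of explicit line bundles on $C$ and to a few multiplication maps of sections. Here is where the numerical hypotheses enter: the inequalities \eqref{eq:bounds1} and \eqref{eq:bounds} (which give $h^1(C,N)=0$, $d<2d_1$, $\deg(L\otimes N^{-1}) = 2d_1-d>0$, and the section counts of Lemma \ref{cl:va}), together with the Brill--Noether generality of $L$ on the general curve $C$, should force the relevant maps to be surjective and the relevant $H^1$'s to cancel. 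The delicate point I anticipate is precisely controlling these multiplication maps, rather than settling for line-bundle vanishing alone. Once this splitting computation is carried out, $h^1(\N_{S_0\vert\Pp^r}) = 0$, hence $h^1(\N_{S\vert\Pp^r}) = 0$ for general $\Ff$, and then $h^0 = \chi = 7(g-1)+(r+1)(r+1-i)$, completing (i)--(iii).
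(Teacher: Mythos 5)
Your computation of $\chi(\N_{S\vert\Pp^r})$ and your proof of (iii) are correct and coincide with the paper's. The reduction of (ii) to the surjectivity of $u\colon H^1(S,\T_S)\to H^1(S,\T_{\Pp^r}\vert_S)$ is also fine (you rightly check $h^2(S,\T_S)=0$ first). But at that point the proof stops: the surjectivity of $u$, which as you yourself say is ``the heart of the matter,'' is never established. Your plan is to specialize to the decomposable scroll $S_0=\Pp(L\oplus N)$, let everything split, and then show that ``the relevant multiplication maps'' are injective/surjective --- but you neither identify which multiplication maps these are nor prove anything about them. That is precisely the nontrivial content of the statement, so as written this is a genuine gap, not a routine verification left to the reader. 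There is also an unaddressed risk in the specialization itself: semicontinuity does go the right way, but it only helps if $h^1(\N_{S_0\vert\Pp^r})$ actually vanishes, and $L\oplus N$ is an \emph{unstable} bundle whose scroll could a priori acquire extra deformations (compare Remark \ref{rem:unstable}, where components of the Hilbert scheme with unstable general member are constructed); you give no argument ruling out a jump of $h^1$ at the split point.

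For comparison, the paper does not specialize at all. It twists $\N_{S\vert\Pp^r}$ by $-\Gamma$, where $\Gamma$ is the special unisecant of degree $d_1$ corresponding to the quotient $\Ff\to\!\!\!\to L$, and shows (Claim \ref{cl:ciro1502}) that $h^1$ and $h^2$ of the twist vanish, so that $h^1(S,\N_{S\vert\Pp^r})=h^1(\Gamma,\N_{S\vert\Pp^r}\vert_\Gamma)$. The latter is killed (Claim \ref{cl:flam2611}) by showing that the coboundary coming from $0\to\N_{\Gamma\vert S}\to\N_{\Gamma\vert\Pp^r}\to\N_{S\vert\Pp^r}\vert_\Gamma\to 0$ is controlled by a map whose dual is exactly the multiplication map $\mu_{L,N}\colon H^0(C,N)\otimes H^0(C,\omega_C\otimes L^{*})\to H^0(C,\omega_C\otimes N\otimes L^{*})$, i.e.\ the off-diagonal block of the Petri map of $L\oplus N$ already proved injective in Lemma \ref{lem:conclusion}. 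So the ingredient your sketch is groping for is available in the paper and is the same one that makes the Brill--Noether locus generically smooth; but your proposal does not reach it, and until the surjectivity of $u$ (equivalently, the injectivity of $\mu_{L,N}$ fed through the restriction to $\Gamma$) is actually carried out, items (i) and (ii) are unproved.
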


\begin{proof}[Proof of Proposition \ref{prop:tghilbi}]  First, we prove (iii).  Since $S$ is linearly normal, from 
{\em Euler's  sequence} we get: $$ \cdots \to H^0(S, \Oc_S(H))^* \otimes
H^2(S, \Oc_S(H)) \to H^2 (S, T_{\Pp^r}|_S) \to 0$$
where $H$ is  a hyperplane section of $S$. 
Since $S$ is a
scroll, then $h^2(S, \Oc_S(H)) = 0$, which implies $h^2
(S, \T_{\Pp^r}|_S)= 0$. Thus (iii) follows by the {\em normal bundle sequence}
\begin{equation}\label{eq:tang}
0 \to \T_S \to \T_{\Pp^{r}}|_S \to \N_{S\vert  \Pp^{r}} \to 0.
\end{equation}

Since $S$ is a scroll of genus $g$, we have
\begin{equation}\label{eq:tgS}
\chi(\mathcal O_S) = 1-g, \quad  \chi(\T_S) = 6 - 6g.
\end{equation}
Since $S$ is linearly
normal, from  Euler's sequence we then get
\begin{equation}\label{eq:tgS2}
\chi(\T_{\Pp^{r}}|_S) = (r +1) (r +1 - i) + g-1.
\end{equation}Thus, from (iii) and
 \eqref{eq:tgS}, \eqref{eq:tgS2} we get
\begin{equation}\label{eq:tgS3bis}
\chi(\N_{S\vert  \Pp^{r}}) = h^0(S, \N_{S\vert  \Pp^{R}}) - h^1(S, \N_{S\vert  \Pp^{r}})=
7(g-1) + (r +1) (r +1 - i).
\end{equation}

The rest of the proof is concentrated on  computing
$h^1(S, \N_{S\vert \Pp^{r}})$.

Since $S=\Pp(\Ff)$ is a scroll corresponding to a general point $[\Ff]\in \mathcal H_C(d,i)$, let $\Gamma$ be the unisecant of $S$ of degree $d_1$ corresponding to the special quotient line bundle $\Ff \to\!\!\!\!\! \to L$.

\begin{claim}\label{cl:ciro1502} One has $h^1(S, \N_{S\vert \Pp^{r}} (-\Gamma)) = h^2(S, \N_{S\vert \Pp^{r}} (-\Gamma))  = 0$, hence
\begin{equation}\label{eq:tgS15}
h^1(S, \N_{S\vert \Pp^{r}}) = h^1(\Gamma, \N_{S\vert \Pp^{r}}|_{\Gamma}).
\end{equation}
\end{claim}

\begin{proof}[Proof of Claim \ref{cl:ciro1502}] Look at the exact
sequence
\[0 \to \N_{S\vert \Pp^{r}} (-\Gamma) \to  \N_{S\vert \Pp^{r}} \to
\N_{S\vert \Pp^{r}}|_{\Gamma} \to 0.\]
From \eqref{eq:tang} tensored by $\Oc_S(-\Gamma)$ we see that $h^2(S, N_{S\vert \Pp^{r}} (-\Gamma)) = 0$
follows from $h^2(S,  \T_{\Pp^r}|_S (-\Gamma)) = 0$ which, by 
Euler's sequence, follows from $h^2(S, \Oc_S(H - \Gamma)) =
h^0(S, \Oc_S(K_S - H + \Gamma)) = 0$, since $K_S - H + \Gamma$
intersects the ruling of $S$ negatively.

As for $h^1(S, \N_{S\vert \Pp^{r}} (-\Gamma))= 0$, this follows from $h^1(S,
\T_{\Pp^r}|_S (-\Gamma)) = h^2(S,  \T_S (-\Gamma)) = 0$. By  Euler's
sequence, the first vanishing follows from $h^2(S, \Oc_S(-\Gamma)) =
h^1(S, \Oc_S(H-\Gamma))=0$. Since $K_S+\Gamma$ meets the ruling
negatively, one has $h^0(S, \Oc_S(K_S +\Gamma)) = h^2(S, \Oc_S(-\Gamma))
=0$. Moreover
$h^1(S, \Oc_S(H-\Gamma)) = h^1(C, N)=0$.

In order  to prove $h^2(S,  \T_S (-\Gamma)) = 0$, consider the exact
sequence
\[0 \to \T_{rel} \to \T_S \to \rho^*(\T_C) \to 0\]
arising from the structure morphism
$S= \Pp(\Ff) \stackrel{\rho}{\to} C$. The vanishing we
need follows from $h^2(S,  \T_{rel} \otimes \Oc_S(-\Gamma)) = h^2
(S, \Oc_S(-\Gamma) \otimes \rho^*(\T_C)) = 0$. The first vanishing
holds since $\T_{rel} \cong \Oc_S (2H - dF)$, where $F$ is a ruling of $S$,
 and therefore,
$\Oc_S(K_S + \Gamma) \otimes \T_{rel}^{*}$ restricts negatively
to the ruling. Similar considerations  yield the second
vanishing. \end{proof}

Consider the exact sequence
\begin{equation}\label{eq:B}
0 \to  \N_{\Gamma\vert  S}  \to   \N_{\Gamma\vert \Pp^{r}}  \to
\N_{S\vert \Pp^{r}}|_{\Gamma}  \to 0.
\end{equation}

\begin{claim}\label{cl:flam2611} The map $H^1(\Gamma, \N_{\Gamma\vert  S}) \stackrel{\alpha}{\longrightarrow}
H^1(\Gamma, \N_{\Gamma\vert \Pp^{r}})$ arising from \eqref{eq:B} is
surjective, hence
$h^1(\Gamma, \N_{S\vert \Pp^{r}}|_{\Gamma})= 0$.
\end{claim}

\begin{proof}[Proof of Claim \ref{cl:flam2611}] Equivalently, we show the injectivity of the dual map
\begin{equation}\label{eq:dual}
H^0(\Gamma, \omega_{\Gamma} \otimes \N_{\Gamma, \Gamma\vert \Pp^{R}}^{*})
\stackrel{\alpha^{*}}{\longrightarrow} H^0(\Gamma, \omega_{\Gamma}
\otimes \N_{\Gamma\vert  S}^{*}) \cong H^0(C, \omega_C \otimes N \otimes
L^{*}).\end{equation}
Consider $\Gamma \subset \Pp^h$,
where $h = d_1 - g + i$,  and the Euler sequence of $\Pp^h$
restricted to $\Gamma$. By taking cohomology and  dualizing, we
get $$0 \to H^1(\Gamma,  \T_{\Pp^h}|_{\Gamma})^{*} \to H^0
(\Gamma, \Oc_{\Gamma}(H)) \otimes H^0(\Gamma, \omega_{\Gamma} (-H))
\stackrel{\mu_0}{\to} H^0(\Gamma, \omega_{\Gamma}),$$where $\mu_0$ is the
 Brill-Noether map of $\Oc_{\Gamma}(H)$. Since $\Gamma \cong
C$ has general moduli, then $\mu_0$ is injective by
Gieseker-Petri's theorem  (cf. \cite{ACGH}) so $h^1(\Gamma,  \T_{\Pp^h}|_{\Gamma})=
0$. From the exact sequence
$$0 \to \T_{\Gamma} \to  \T_{\Pp^h}|_{\Gamma} \to \N_{\Gamma\vert \Pp^h} \to 0$$ we get
$h^1(\Gamma,  \N_{\Gamma\vert \Pp^h})= 0$. From the inclusions
$\Gamma \subset \Pp^h \subset \Pp^R$ we have the sequence
\[ 0 \to \N_{\Gamma\vert \Pp^{h}} \to \N_{\Gamma\vert \Pp^{R}} \to
\N_{\Pp^h\vert \Pp^{R}}|_{\Gamma} \to 0,\]
which shows that $H^1(\Gamma, \N_{\Gamma\vert \Pp^{R}}) \cong H^1(\Gamma, \N_{\Pp^h\vert \Pp^{R}}|_{\Gamma})$, i.e.
\begin{equation}\label{eq:ganzissimo}
H^0(\Gamma, \omega_{\Gamma} \otimes \N_{\Gamma\vert \Pp^{R}}^{*}) \cong
H^0(\Gamma, \omega_{\Gamma} \otimes \N_{\Pp^h\vert \Pp^{R}}|_{\Gamma}^{*}).
\end{equation} On the other hand,
from \eqref{eq:FUND!} and the non-speciality of $N$, we get $$0
\to H^0(C, L)^{*} \to H^0(C, \Ff)^{*} \to H^0(C, N)^{*} \to
0.$$Since $H^0(S, \Oc_S(H)) \cong H^0(C, \Ff)$ and $\Oc_{\Gamma}(H)
\cong L$, the Euler sequences restricted to $\Gamma$ give the
following commutative diagram
\begin{displaymath}
\begin{array}{ccccccc}
      &             &     &     0                       &     &     0     &     \\
       &             &     &     \downarrow                        &     &     \downarrow      &     \\
0 \to & \Oc_{\Gamma} & \to & H^0(C, L)^{*} \otimes L
& \to & \T_{\Pp^h}|_{\Gamma} & \to 0  \\
      &      ||       &     &     \downarrow                        &     &     \downarrow      &     \\
0 \to & \Oc_{\Gamma} & \to & H^0(C, \Ff)^{*} \otimes
L & \to & \T_{\Pp^r}|_{\Gamma} & \to 0  \\
     &             &     &     \downarrow                        &     &     \downarrow      &     \\
 &  &  & H^0(C, N)^{*} \otimes L&\cong  & \N_{\Pp^h\vert \Pp^{r}}|_{\Gamma} & \\
          &             &     &     \downarrow                        &     &     \downarrow      &     \\
     &             &     &     0                       &     &     0     &

\end{array}
\end{displaymath}
This gives
\begin{equation}\label{eq:ganzissimo2}
H^0(\Gamma, \omega_{\Gamma} \otimes \N_{\Pp^h\vert \Pp^{R}}|_{\Gamma}^{*})
\cong H^0(C, N) \otimes H^0(C, \omega \otimes L^{*}).
\end{equation}By \eqref{eq:dual}, \eqref{eq:ganzissimo} and \eqref{eq:ganzissimo2}, we  see that 
$\alpha^{*} = \mu_{L,N}$,
whose injectivity has been shown in Lemma \ref{lem:conclusion}.
\end{proof}

From Claim \ref {cl:flam2611},  \eqref{eq:tgS3bis} and \eqref{eq:tgS15}, both (i) and (ii)
follow.
\end{proof}

\subsection{Components of the Hilbert scheme of linearly normal, special scrolls}\label {ssecx:comp}
We  denote by $ {\rm Hilb} (d,g,i)$ the open
subset of the Hilbert scheme parametrizing smooth scrolls in
$\Pp^r$ of genus $g$, degree $d$ and speciality
$i$, with $r = d-2g+1+i$.

\begin{theorem}\label{thm:Main2} Numerical assumptions as in Theorem \ref{thm:Main}. Then
$ {\rm Hilb} (d,g,i)$ has an irreducible component  $\HH$ which contains all
$\HH_C(d,i)$ with $C$ a general curve of genus $g$.
The general point
$[S] \in \HH$ is a smooth scroll of degree $d$, genus $g$ and
speciality $i$, which is linearly normal in $\Pp^r$. Moreover:
\begin{itemize}
\item[(i)] $\HH$ is generically smooth  of dimension $\dim(\HH)
= 7g-7 + (r+1) (r+1-i)$; 
\item[(ii)] $[S] \in \HH$ general corresponds to a pair $(\Ff,C)$, where $C$ has general moduli and
$\Ff$ is stable  of speciality $i$ on $C$.
\end{itemize}  When $i =1$ and $ 3g+1 \le d \le 4g-5$,  the union of all
$\HH_C(d,i)$ with $C$ a general curve of genus $g$ is dense in 
$\HH$ and the general scroll $[S] \in \HH$ has a canonical curve as the unique special section of minimal degree. 
\end{theorem}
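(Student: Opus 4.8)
The plan is to treat Proposition~\ref{prop:tghilbi} as the engine of the proof and to reduce everything else to deformation theory and dimension bookkeeping.

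First I would feed the vanishing $h^1(S,\N_{S\vert\Pp^r})=0$ of Proposition~\ref{prop:tghilbi} into the standard deformation theory of the Hilbert scheme: at such a point $\Hilb(d,g,i)$ is smooth, with Zariski tangent space $H^0(S,\N_{S\vert\Pp^r})$, hence of local dimension $h^0(S,\N_{S\vert\Pp^r})=7g-7+(r+1)(r+1-i)$. Applying this to a general $[S]\in\HH_C(d,i)$ with $C$ general, I would then observe that the scrolls swept out by the various $\HH_C(d,i)$ form an \emph{irreducible} family: the data $(C,L,N,\Ff)$ of \eqref{eq:FUND!} together with a projective frame vary in an irreducible parameter space over $\M_g$ (when $\rho(g,d_1,d_1-g+i)=0$, so that $W^{d_1-g+i}_{d_1}(C)$ is finite, irreducibility is guaranteed by the transitivity of the Brill--Noether monodromy over $\M_g$). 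Since the general member of this irreducible family is a smooth point of $\Hilb(d,g,i)$, the whole family lies on a single irreducible component $\HH$, which is therefore generically smooth of dimension $7g-7+(r+1)(r+1-i)$. This yields the first assertions and part~(i).

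For part~(ii) I would introduce the rational modular map $\mu\colon\HH\dasharrow\mathcal B$ to the universal Brill--Noether locus over $\M_g$, sending $[S]=[\Pp(\Ff)]$ to the pair $(\Ff,C)$; it is well defined on the dense locus of smooth scrolls because $\Pp(\Ff)\cong\Pp(\Ff')$ forces $\Ff\cong\Ff'$. Composing with the projection to $\M_g$ gives a map whose image contains every general $[C]$ (as $\HH\supseteq\HH_C(d,i)$), hence is dominant, so the base curve of a general $[S]$ has general moduli. The fibres of $\mu$ are single ${\rm PGL}(r+1,\C)$--orbits, the embedding being by the complete system $\vert\Oc_{\Pp(\Ff)}(1)\vert$; since $r+1=\ell=d-2g+2+i$ yields
$$\dim\HH=7g-7+(r+1)(r+1-i)=(3g-3)+\rho_d^\ell+\dim{\rm PGL}(r+1,\C),$$
this forces $\dim\mu(\HH)\ge(3g-3)+\rho_d^\ell$. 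But $\mu(\HH)$ is irreducible and contains the constructed bundles, so its closure is the component $\mathcal B_0\subseteq\mathcal B$ whose fibre over a general $[C]$ is the Brill--Noether component of Theorem~\ref{thm:Main}, of dimension $\rho_d^\ell$; thus $\dim\mathcal B_0=(3g-3)+\rho_d^\ell$ and $\mu$ is dominant onto $\mathcal B_0$. A general $[S]\in\HH$ therefore corresponds to a general point of $\mathcal B_0$, which by Theorem~\ref{thm:Main} and the openness of stability is a stable bundle of speciality $i$ on a curve with general moduli. This proves~(ii).

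Finally, for $i=1$ and $3g+1\le d\le 4g-5$ I would place myself in the situation $d_1=2g-2$, $L\cong\omega_C$ of Corollary~\ref{prop:Main}, where Remark~\ref{rem:impo}(i) gives $y=\rho_d^\ell$; consequently $\dim\bigcup_C\HH_C(d,1)=(3g-3)+\rho_d^\ell+\dim{\rm PGL}(r+1,\C)=\dim\HH$, so $\bigcup_C\HH_C(d,1)$ is dense in $\HH$, and by Lemma~\ref{lem:step3} together with Remark~\ref{rem:impo}(i) the unique minimal-degree section of the general scroll corresponds to $L\cong\omega_C$, i.e.\ is a canonical curve. I expect the one genuinely delicate point to be part~(ii): for $i>1$ the general member of $\HH$ is \emph{not} one of the explicitly constructed scrolls (Remark~\ref{rem:impo}(iii)), so the claim that it still corresponds to a stable bundle on a general curve must be extracted from the modular-map dimension count above, whose whole force rests on the numerical identity $\dim\HH=(3g-3)+\rho_d^\ell+\dim{\rm PGL}(r+1,\C)$ matching $\dim\mathcal B_0$.
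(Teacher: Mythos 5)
Your proposal is correct and follows essentially the same route as the paper, whose own proof is only a three-line sketch deferring generic smoothness and the dimension count to Proposition \ref{prop:tghilbi} and the final density assertion to Corollary \ref{prop:Main}; you supply the deformation-theoretic, irreducibility and modular-map details (the latter already set up in Remark \ref{rem:dimension}) that the paper leaves implicit. The only step worth tightening is ``$\mu(\HH)$ contains the constructed bundles, so its closure is $\mathcal B_0$'': to exclude that $\mu(\HH)$ lies in a different component of the universal Brill--Noether locus meeting $\mathcal B_0$ along the constructed bundles, note that these are smooth points of $B^\ell_C(d)$ by Lemmas \ref{lem:Petri} and \ref{lem:conclusion}, hence lie on a unique component --- although for part (ii) itself stability already follows from openness of stability in the family over $\HH$, and speciality $i$ from semicontinuity of $h^1$ together with non-degeneracy of the general $S$ in $\Pp^r$.
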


\begin{proof} The construction of $\HH$ is clear. Its generic smoothness and 
the dimension count
follow from Proposition \ref{prop:tghilbi}.
The last part of the statement  follows from Corollary \ref{prop:Main}.  \end{proof}

\begin{remark}\label{rem:vaje2} As we saw in Remark \ref {rem:impo}, the union of 
 $\HH_C(d,i)$ with $C$ a general curve of genus $g$ is never dense in $\HH$ unless $i=1$ and $d_1=2g-2$. \end{remark}

\begin{remark}\label{rem:unstable} In \cite{CCFMsp} we constructed components of Hilbert schemes parametrizing smooth, linearly normal, special scrolls $S \subset \Pp^r$, of degree $d$, genus $g$ having the base curve with general moduli. Such components were constructed for any $g \geq 3$, $i \ge 1$ and for any 
$d \geq \frac{7g-\epsilon}{2} - 2 i + 2$, $0 \le \epsilon \le 1$, $\epsilon \equiv g \pmod 2$, unless $i =2$ where 
$d \ge 4g-3$ (cf. \cite[Thm. 6.1]{CCFMsp}). The general point of any such component corresponds to an unstable vector bundle  on $C$ (cf. \cite[Rem. 6.3]{CCFMsp}). 
\end{remark}

\subsection{Non-linearly normal, special scrolls}\label{ssec:nonln}

Let $n = d - 2g
+ 1 $. Recall that there is a unique component $\HH_{d,g}$ of the
Hilbert scheme containing all linearly normal, non--special scrolls of degree
$d$ and genus $g$ in $\Pp^{n}$ (cf. \cite[Theorem 1.2]{CCFMLincei} and \cite[Theorem 1]{CCFMnonsp}).

Consider now  the family $\mathcal Y_{i}$ whose general
element is a general projection to $\Pp^n$ of the scroll $S
\subset \Pp^r$, $r = n+i$, with $[S] \in  \HH$ general as in
Theorem \ref{thm:Main2}. 
The following proposition shows that the families  $\mathcal Y_{i}$ never fill up components of the Hilbert scheme of $\Pp^ n$.

\begin{proposition}\label{prop:ciropasqua} In the above setting, for  $d$, $d_1$, $g$ and $i$ as in Theorem \ref{thm:Main}, $\mathcal Y_{i}$ is a generically smooth subset of $\HH_{d,g}$ of codimension $i^2$ whose general point is smooth for  $\HH_{d,g}$.
\end{proposition}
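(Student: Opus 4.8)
The statement concerns $\mathcal{Y}_i$, the family of general projections to $\mathbb{P}^n$ of linearly normal special scrolls $S \subset \mathbb{P}^r$ (with $r = n+i$) that are general points of the component $\HH$ from Theorem \ref{thm:Main2}. We must show $\mathcal{Y}_i$ is a generically smooth subset of $\HH_{d,g}$ of codimension $i^2$ whose general point is a smooth point of $\HH_{d,g}$.

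The plan is to compute the dimension of $\mathcal{Y}_i$ directly and to control the normal bundle cohomology of a general projected scroll $S' \subset \mathbb{P}^n$. First I would compute $\dim \mathcal{Y}_i$ as a parameter count: one starts with the dimension of $\HH$, namely $7g-7+(r+1)(r+1-i)$ from Theorem \ref{thm:Main2}(i), and adds the dimension of the space of general projections. A general projection $\mathbb{P}^r \dashrightarrow \mathbb{P}^n$ from a general center of dimension $r-n-1 = i-1$ is parametrized (modulo the choice of the target $\mathbb{P}^n$, i.e.\ modulo $\mathrm{PGL}(n+1,\mathbb{C})$) by the Grassmannian of $(i-1)$-planes in $\mathbb{P}^r$, of dimension $i(r-i+1) = i(n+1)$, and then one quotients by the fact that $\mathrm{PGL}(r+1,\mathbb{C})$ already acts on $\HH$. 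Since $S$ is linearly normal, the projection is genuinely non-trivial, and the resulting image scrolls $S' \subset \mathbb{P}^n$ vary in a family whose dimension I would assemble from these pieces. Comparing this with $\dim \HH_{d,g}$, the unique component of $\mathrm{Hilb}(d,g,0)$ containing linearly normal non-special scrolls, should yield the codimension $i^2$; the bookkeeping reduces to the identity $\dim\HH_{d,g} - \dim\mathcal{Y}_i = i^2$, which I would verify by substituting the known formula $\dim\HH_{d,g} = 7g-7+(n+1)^2$ (the $i=0$ specialization of the dimension in Theorem \ref{thm:Main2}(i)) and simplifying.

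Next I would address generic smoothness of $\HH_{d,g}$ at the general point $[S']$ of $\mathcal{Y}_i$. The component $\HH_{d,g}$ is generically smooth (it contains non-special scrolls whose normal bundle is well understood via \cite{CCFMLincei,CCFMnonsp}), so the key point is that $[S']$, although corresponding to a special scroll, still satisfies $h^1(S', \N_{S'\vert \mathbb{P}^n}) = 0$. I would prove this by relating the normal bundle of the projected scroll $S'$ to that of $S$: there is an exact sequence expressing $\N_{S'\vert \mathbb{P}^n}$ in terms of the restriction of the projection and the normal bundle $\N_{S\vert \mathbb{P}^r}$, whose higher cohomology vanishes by Proposition \ref{prop:tghilbi}(ii)--(iii). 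Since a general projection of a smooth, linearly normal surface is again smooth (the secant variety has the expected dimension for $n$ in the stated range), and since projecting from general points imposes only the expected vanishings, I expect $h^1(S', \N_{S'\vert \mathbb{P}^n}) = 0$ to follow, giving smoothness of the Hilbert scheme at $[S']$; the dimension of its tangent space then matches $\dim \HH_{d,g}$, placing $[S']$ in $\HH_{d,g}$ as a smooth point.

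The main obstacle will be the cohomological comparison between $\N_{S'\vert \mathbb{P}^n}$ and $\N_{S\vert \mathbb{P}^r}$ under general projection. Projection introduces an auxiliary exact sequence involving the differential of the projection map and a twist by the center of projection, and controlling the $h^1$ of the projected normal bundle requires knowing that the general projection does not create unexpected cohomology — essentially that the $i$ projection points are in sufficiently general position relative to the embedded tangent and secant loci of $S$. I would handle this by combining the vanishing from Proposition \ref{prop:tghilbi} with the genericity of the projection center, reducing the surjectivity needed on $H^1$ to the already-established injectivity of the multiplication map $\mu_{L,N}$ (Lemma \ref{lem:conclusion}, reused in Claim \ref{cl:flam2611}). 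The generic smoothness of $\mathcal{Y}_i$ itself, as a subvariety, then follows since its parametrizing data (the component $\HH$, which is generically smooth, together with the rational choice of projection center) are themselves smooth and the projection construction is dominant onto $\mathcal{Y}_i$ with constant fiber dimension.
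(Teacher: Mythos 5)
Your overall strategy matches the paper's: a parameter count for $\dim\mathcal Y_i$ and a transfer of the vanishing $h^1(S,\N_{S\vert\Pp^r})=0$ from Proposition \ref{prop:tghilbi} to the projected scroll via the exact sequence of the projection (the Rohn sequence $0\to\C^i\otimes\Oc_S(H)\to\N_{S\vert\Pp^r}\to\N_{S'\vert\Pp^n}\to 0$). Two remarks on the execution: the $h^1$--vanishing for $\N_{S'\vert\Pp^n}$ follows at once from $h^1(S,\N_{S\vert\Pp^r})=h^2(S,\Oc_S(H))=0$ in that sequence, with no further appeal to the injectivity of $\mu_{L,N}$ (which enters only inside the proof of Proposition \ref{prop:tghilbi}, already cited); and your dimension count needs the additional observation that the stabilizer $G_{S'}\subset{\rm PGL}(n+1,\C)$ of the projected scroll is finite --- this is exactly where the stability of $\Ff$ is used --- otherwise the ${\rm PGL}(n+1,\C)$--orbit term overcounts. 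With that in place the arithmetic does give $\dim\mathcal Y_i=7g-7+(n+1)^2-i^2$, and the codimension can also be read off, as the paper does, from the cokernel of $H^0(S,\N_{S\vert\Pp^r})\to H^0(S',\N_{S'\vert\Pp^n})$, which is $H^1(S,\Oc_S(H))^{\oplus i}$ of dimension $i^2$.

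The genuine gap is the identification of the component containing $[S']$ with $\HH_{d,g}$. From $h^1(S',\N_{S'\vert\Pp^n})=0$ you correctly conclude that $[S']$ is a smooth point of the Hilbert scheme, hence lies on a unique component $\mathcal Z$ of dimension $\chi(\N_{S'\vert\Pp^n})=7(g-1)+(n+1)^2$; but ``the tangent space dimension matches $\dim\HH_{d,g}$'' does not place $[S']$ on $\HH_{d,g}$ --- a priori $\mathcal Z$ could be a different component of the same dimension, in which case $\mathcal Y_i$ would not be a codimension $i^2$ subset of $\HH_{d,g}$ at all, and the proposition would fail as stated. The paper closes this by invoking the characterization from the proof of \cite[Theorem 2]{CCFMnonsp}: $\HH_{d,g}$ is the \emph{only} component of the Hilbert scheme whose general point is a scroll arising from a stable bundle; since stability is an open condition and the general member of $\mathcal Y_i$ is stable, the general point of $\mathcal Z$ corresponds to a stable scroll, forcing $\mathcal Z=\HH_{d,g}$. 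Some argument of this kind (or another way of seeing that $S'$ deforms inside its component to a linearly normal non-special scroll) is needed and is missing from your proposal.
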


\begin{proof} 
Let $[S] \in \HH$ be
general  with $S \cong \Pp(\Ff)$ and let  $S' \subset \Pp^n$ be a general
projection of $S$. Let $G_{S'} \subset {\rm
PGL}(n+1, \C)$ be the subgroup of projective transformations  fixing $S'$.
Since $G_S \subset {\rm Aut}(S) \cong {\rm Aut}(\Pp(\Ff)) $, one has $\dim(G_{S'})= 0$, because  $\Ff$ is stable.

Then  $\dim(\mathcal Y_i)$ is:
\begin{itemize}
\item $3g -3$, for the parameters on which $C$ depends, plus \item
$4g-3 -  i (r+1)$, for the parameters on which $\Ff$
depends, plus \item $\dim (\G (n, r)) = (n+1) (r-n) = (n+1) i $,
which are the parameters for the projections, plus \item $(n +1)^2
-1 = \dim ({\rm PGL}(n+1, \C))$, minus \item $\dim(G_{S'})=0$.
\end{itemize} 
Adding up, we get $\dim(\mathcal Y_i) = \dim (\HH_{d,g}) - i^2$. 

Consider the {\em Rohn exact sequence}
$$0 \to \C^i \otimes \Oc_S(H) \to \N_{S\vert \Pp^{r}} \to \N_{S'\vert \Pp^{n}} \to 0$$(see,
e.g. \cite{Cil}, p. 358, formula (2.2)). From Proposition
\ref{prop:tghilbi}, (ii), we have $h^1(S, \N_{S\vert \Pp^r}) =0$, therefore
also $h^1(S',\N_{S'\vert \Pp^{r}}) = 0$. Hence $\mathcal Y_i$ is contained
in a component $\mathcal Z$ of the Hilbert scheme of dimension
$\chi(\N_{S'\vert \Pp^{r}} ) = 7(g-1) + (r+1)^2$ and the general point
of $\mathcal Y_i$ is a smooth point of $\mathcal Z$.

The general point of $\mathcal Y_i$ is a smooth
scroll on $C$ arising from a stable, rank-two vector bundle. The
component $\HH_{d,g}$ is the only
component of the Hilbert scheme whose general point corresponds to a stable scroll (cf. the proof of \cite[Theorem 2]{CCFMnonsp}).
Therefore, $\mathcal Z = \HH_{d,g}$. The map
$H^0(S,\N_{S\vert \Pp^{r+1}}) \to H^0(S',\N_{S'\vert \Pp^{r}}) $ is not
surjective: its cokernel is $\C^i \otimes H^1(\Oc_S(H))^{\oplus i}
$, which has dimension $i^2$. This means that $\mathcal Y_i$ is a
generically smooth subset of $\HH_{d,g}$ of codimension $i^2$.
\end{proof}


\end{document}